\newtheorem{prevtheorem}{Theorem}
\theoremstyle{plain}
\newtheorem{Thm}{Theorem}[section]
\newtheorem{Cor}[Thm]{Corollary}
\newtheorem{Lem}[Thm]{Lemma}
\theoremstyle{definition}
\newtheorem{Def}[Thm]{Definition}
\newcommand{\cA}{$\cal A$}
\newcommand{\cB}{$\cal B$}
\title{Exponent-Critical Groups}
\author{Simon R. Blackburn\footnote{Corresponding Author: Simon R. Blackburn, Royal Holloway, University of London, S.Blackburn@rhul.ac.uk}, William Cocke\footnote{William Cocke, Language Technology Institute, Carnegie Mellon University, cocke@cmu.edu}, Andrew Misseldine\footnote{Andrew Misseldine, Southern Utah University, andrewmisseldine@suu.edu}\\
and Geetha Venkataraman\footnote{Geetha Venkataraman, Dr. B. R. Ambedkar University Delhi, geetha@aud.ac.in}}
\begin{document}
\maketitle
\begin{abstract}
We define and investigate the property of being `exponent-critical' for a finite group. A finite group is said to be exponent-critical if its exponent is not the least common multiple of the exponents of its proper non-abelian subgroups. We explore properties of exponent-critical groups and give a characterization of such groups. This characterization generalises a classical result of Miller and Moreno on minimal non-abelian groups; interesting families of $p$-groups appear. 
 \end{abstract}

\section{Introduction}

Often times in group theory questions about a group can be reduced to questions about its proper subgroups. This is especially true of various first order properties of groups. For example, the question ``does a group contain an element of a certain order?" can be answered by examining all cyclic subgroups of the group. The question ``is a given finite group solvable?" can be answered by examining all $2$-generated subgroups of the group~\cite{F1995}. A more complicated and celebrated result is Thompson's classification of $N$-groups, finite groups all of whose subgroups are either solvable or Fitting-free~\cite{T1968}. A famous and classical line of research involved the question of what can be said about a non-abelian group all of whose proper subgroups are abelian. These groups are known as {\it minimal non-abelian} and were studied by Miller and Moreno in 1903 \cite{GAMHCM1903}. In this article, we introduce the following question: ``What does the exponent of the non-abelian proper subgroups of $G$ imply about the exponent of $G$?" More precisely, we make the following definition.

\begin{Def}
A finite group $G$ is \emph{exponent-critical} if the exponent of $G$ is not the least common multiple of the exponents of its proper non-abelian subgroups.
\end{Def}
To give an example, we observe that the dihedral group $D_{16}$ of order $16$ is exponent-critical: it has exponent $8$, and any proper non-abelian subgroup of $D_{16}$ is isomorphic to the group $D_8$ of exponent $4$. As a non-example, we note that $D_{24}$ is not exponent-critical: it has exponent $12$ and contains proper non-abelian subgroups isomorphic to $D_{12}$ (of exponent $6$) and $D_8$ (of exponent~$4$).

We remark that if we drop the non-abelian condition in our definition above, the problem becomes trivial. Indeed, the family of finite groups whose exponent is not the least common multiple of the exponents of its proper subgroups is exactly the family of non-trivial cyclic groups of prime power order.

In this paper, we investigate (finite) exponent-critical groups. An abelian group $G$ is exponent-critical if and only if it is non-trivial, so it is the non-abelian case which is interesting. The exponent $\exp(G)$ of a non-cyclic group $G$ is the least common multiple of the exponents of its maximal subgroups, as every cyclic subgroup is contained in a maximal subgroup. So at least one maximal subgroup of a non-abelian exponent-critical group must be abelian. But a finite group with an abelian maximal subgroup is solvable. (This result originally appeared in a paper by Herstein \cite{H1958}, is a weakening of a result in Scott \cite[13.4.6]{WRS1987}, and is a homework problem in Dixon and Mortimer \cite[3.4.7]{DM}.) So, in particular, we may make use of the theory of Hall subgroups when investigating exponent-critical groups.

Every minimal non-abelian group is exponent-critical, so our characterization of exponent-critical groups is as an extension of Miller and Moreno's characterization of minimal non-abelian groups. Since minimal non-abelian groups are well-studied, it is especially interesting to find exponent-critical groups that are not minimal non-abelian. We will construct many such examples. 
 
In this paper we show that exponent-critical groups must lie in several explicitly defined families, and all groups in these families are exponent-critical. Before discussing our results, we define the following (standard) notation. For a group $G$, we write $Z(G)$ for the center of $G$ and $\Phi(G)$ for the Frattini subgroup of $G$. If $H$ is a subgroup $G$ (written $H\leq G$), we write $\mathbf{N}_G(H)$ for the normalizer of $H$ in $G$ and $\mathbf{C}_G(H)$ for the centralizer of $H$ in $G$. We will write $\exp(G)$ for the exponent of $G$. For an element $g\in G$, we write $o(g)$ for the order of $g$.

We first show that exponent-critical groups cannot be divisible by a large number of distinct primes: 

\begin{prevtheorem}\label{prevthm:4primes}
The order of a non-abelian exponent-critical group is divisible by at most three distinct primes.
\end{prevtheorem}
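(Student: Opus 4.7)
The plan is to exploit the solvability of $G$ established in the introduction (a non-abelian exponent-critical group has an abelian maximal subgroup and is therefore solvable), which makes Hall subgroups available for every set of primes, and then to show that having four or more distinct prime divisors would force every Sylow subgroup of $G$ to be abelian and pairwise commuting, contradicting the fact that $G$ is non-abelian.

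First, I would write $\exp(G)=\prod_{i=1}^{k} p_i^{e_i}$, where $p_1,\dots,p_k$ are the distinct primes dividing $|G|$, and suppose for contradiction that $k\geq 4$. Since the lcm of exponents of proper non-abelian subgroups is a proper divisor of $\exp(G)$, some prime $p\in\{p_1,\dots,p_k\}$ has the property that every proper non-abelian subgroup of $G$ has $p$-exponent strictly less than $p^{e_p}$; call $p$ the \emph{witness prime}. Equivalently, every proper subgroup of $G$ that contains an element of order $p^{e_p}$ must be abelian.

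I would then apply this principle to two classes of subgroups. The Sylow $p$-subgroup $P$ of $G$ has exponent $p^{e_p}$ and is proper (as $k\geq 2$), so $P$ must be abelian. For any prime $q\neq p$ dividing $|G|$, a Hall $q'$-subgroup $H_q$ of $G$ contains a conjugate of $P$ and so contains an element of order $p^{e_p}$, while being proper; therefore $H_q$ is abelian and decomposes as the direct product of its Sylow subgroups, which are precisely the Sylow subgroups of $G$ at the primes different from $q$. Letting $q$ vary over the primes distinct from $p$ shows that every Sylow subgroup of $G$ is abelian.

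Finally, the hypothesis $k\geq 4$ enters decisively: for any two distinct prime divisors $a,b$ of $|G|$, we can choose $q\in\{p_1,\dots,p_k\}\setminus\{a,b,p\}$, and the abelian Hall $q'$-subgroup $H_q$ then contains Sylow $a$- and $b$-subgroups of $G$, which must centralize each other. Hence all Sylow subgroups of $G$ pairwise commute and are abelian, so $G$ (which is generated by its Sylow subgroups) is their internal direct product and is therefore abelian, contradicting the hypothesis. The main obstacle is precisely this last selection of $q$: with only three prime divisors one cannot avoid $\{a,b,p\}$ when $\{a,b\}$ is the complement of $\{p\}$, which is exactly why the theorem permits up to three primes rather than only two.
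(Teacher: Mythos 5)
Your proof is correct in substance but routes the contradiction differently from the paper. The paper argues in the opposite direction: it picks a Sylow $p$-subgroup $P$ not contained in the centre, finds a prime $q$ such that $\mathbf{C}_G(P)$ contains no Sylow $q$-subgroup (so that a Hall $\{p,q\}$-subgroup is non-abelian), and then observes that for every further prime $r$ a Hall $\{p,q,r\}$-subgroup is proper (this is exactly where four primes are needed), non-abelian, and simultaneously a $p$-, $q$- and $r$-witness; hence every prime dividing $|G|$ admits a witness, contradicting exponent-criticality. You instead fix the witness-less prime $p$ and show that every $q$-complement with $q\neq p$ is abelian, forcing $G$ itself to be abelian. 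Both arguments rest on the same two pillars --- solvability (hence Hall subgroups for every set of primes) and the observation that a proper non-abelian subgroup containing a full Sylow $p$-subgroup would be a $p$-witness --- but you work with the large Hall subgroups where the paper works with the small ones, and your version needs one extra step that is left slightly loose: different choices of $q$ produce different Hall $q'$-subgroups, so what you literally establish is that for each pair $a,b$ \emph{some} Sylow $a$-subgroup centralizes \emph{some} Sylow $b$-subgroup, which does not immediately exhibit $G$ as an internal direct product. Two routine patches are available: either fix a Sylow basis $P_1,\dots,P_k$ of pairwise permutable Sylow subgroups (which exists since $G$ is solvable) and run the argument with the complements $\prod_{i\neq j}P_i$, so that the same Sylow subgroups commute across all pairs; or note that your conclusion already shows $|\mathbf{N}_G(A)|$ is divisible by the full power of every prime for each Sylow subgroup $A$, whence every Sylow subgroup is normal and $G$ is the direct product of its abelian Sylow subgroups. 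With either patch the argument is complete.
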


The exponent-critical groups divisible by exactly three primes are classified as follows:

\begin{prevtheorem}\label{prevthm:3primes}
Let $G$ be a non-abelian finite group whose order is divisible by exactly three distinct primes. Then $G$ is exponent-critical if and only if $G$ is a direct product of a cyclic Sylow subgroup of $G$ and its complement, which is minimal non-abelian.
\end{prevtheorem}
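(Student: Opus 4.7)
\noindent\emph{Approach.} The reverse implication is a direct verification. Assume $G = C \times M$ with $C$ a cyclic Sylow $p$-subgroup (for some prime $p$) and $M$ a minimal non-abelian complement (whose order is divisible by the other two primes). Since $\gcd(|C|,|M|)=1$, every subgroup of $G$ decomposes as $A \times B$ with $A \le C$ and $B \le M$. A proper non-abelian such subgroup must have $B$ non-abelian, forcing $B = M$ (as $M$ is minimal non-abelian) and $A \lneq C$; since $C$ is cyclic of prime-power order, $\exp(A) \le \exp(C)/p$, and so $\exp(A \times M) < \exp(G)$. Taking the lcm over all proper non-abelian subgroups still stays below $\exp(G)$, so $G$ is exponent-critical.

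For the forward direction, assume $G$ is non-abelian exponent-critical and $|G|$ is divisible by exactly three primes $p, q, r$. By the discussion in the introduction, $G$ is solvable. Because the lcm of the exponents of the proper non-abelian subgroups of $G$ is a strict divisor of $\exp(G)$, at least one prime must fail to be fully captured; relabel so that this is $r$, so every proper non-abelian $N \le G$ satisfies $\exp(N)_r < \exp(G)_r$. By P.~Hall's theorem, $G$ admits a Sylow basis $\{P, Q, R\}$ of pairwise permutable Sylow subgroups. The Hall subgroups $PR$ and $QR$ are proper (as $|G|$ has three prime divisors) and each contains $R$, so $\exp(PR)_r = \exp(QR)_r = \exp(G)_r$; by the choice of $r$, both $PR$ and $QR$ must be abelian.

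It follows that $R$ is abelian and centralizes both $P$ and $Q$; since $G = PQR$, we conclude $R \le Z(G)$, so $G = R \times H$ with $H = PQ$ the Hall $r'$-subgroup (and $H$ is non-abelian because $G$ is not and $R$ is central). As $\gcd(|R|,|H|)=1$, every subgroup of $G$ has the form $A \times B$ with $A \le R$ and $B \le H$, and the proper non-abelian ones are precisely those with $B$ non-abelian and $(A,B) \neq (R,H)$. Because $r \nmid |H|$, the lcm in the definition of exponent-critical splits into independent $r$- and $r'$-parts. The subgroup $\{1\} \times H$ already contributes $\exp(H)$ to the $r'$-part, so exponent-criticality forces the $r$-part of the lcm to be strictly less than $\exp(R)$. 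This requires simultaneously (a) that no proper $B \lneq H$ be non-abelian (else pairing with $A = R$ gives $r$-part equal to $\exp(R)$), so $H$ is minimal non-abelian, and (b) that the maximum of $\exp(A)$ over $A \lneq R$ be strictly less than $\exp(R)$, which for the $r$-group $R$ is equivalent to $R$ being cyclic.

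The main obstacle is the centrality step $R \le Z(G)$: one must coordinate a \emph{single} Sylow basis so that the abelianness of $PR$ and of $QR$ pins down the same Sylow $r$-subgroup as centralized by both $P$ and $Q$. The closing bookkeeping — separating the lcm into $r$- and $r'$-parts by coprimality — is delicate but mechanical once the direct factorization $G = R \times H$ is in hand.
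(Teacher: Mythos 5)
Your proposal is correct and follows essentially the same route as the paper: identify the prime $r$ at which the lcm is deficient, observe that the two proper Hall subgroups containing a Sylow $r$-subgroup would otherwise be $r$-witnesses and hence are abelian, deduce that the Sylow $r$-subgroup is central (the paper reaches this via the same Hall-subgroup reasoning, and your Sylow-basis device cleanly handles the coordination worry you raise), and then extract cyclicity of $R$ and minimal non-abelianness of the complement from the coprime direct factorization exactly as the paper does.
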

The complement in Theorem~\ref{prevthm:3primes} is a minimal non-abelian group $H$ whose order is divisible by exactly two primes. Such groups $H$ are well understood: see Mastnak and Radjavi~\cite[Section~2.2]{MM09}, for example, for a description of Miller and Moreno's classification of such groups using modern notation.

We now turn to the case when an exponent-critical group has order divisible by exactly two primes $p$ and $q$. We need the following definition in order to state our theorem. Recall that the \emph{$p$-part} of a natural number $n=p^am$ with $\gcd(p,m)=1$ is $p^a$.
\begin{Def}
Let $G$ be a finite group, and let $p$ be a prime number. A subgroup $H$ of a group $G$ is a \emph{$p$-witness for $G$} if it is non-abelian, proper, and the $
p$-parts of $\exp(G)$ and $\exp(H)$ are equal. 
\end{Def}
It is not hard to see that a finite group $G$ is exponent-critical if and only if there exists a prime $p$ dividing the order of $G$ such that no $p$-witness for $G$ exists.
\begin{prevtheorem}\label{prevthm:2primes}
Let $G$ be a non-abelian exponent critical group whose order is divisible by exactly two distinct primes $p$ and $q$. Without loss of generality, swapping $p$ and $q$ if necessary, we may suppose that $G$ does not possess a $p$-witness for $G$.  Then $G$ is isomorphic to one of the following families of exponent-critical groups. 
\begin{enumerate}[\rm{(}i\rm{)}]
\item $G$ is a direct product of a cyclic Sylow $p$-subgroup of $G$ and its complement which is minimal non-abelian.
\item $G$ is a semi-direct product of a normal abelian Sylow $p$-subgroup $P$ of $G$ by a cyclic Sylow $q$-subgroup $Q$. The subgroup $P$ is a direct product of cyclic groups of order $p^m$ for some positive integer $m$. We have  $|G: \mathbf{C}_G(P)| =q$, and $Q$ acts non-trivially and irreducibly on $P/P^p$. 
\item $G$ is a semi-direct product of a normal abelian Sylow $p$-subgroup $P$ of $G$ by a cyclic Sylow $q$-subgroup $Q$. The subgroup $P$ is a direct product of a cyclic group of order $p^m$ and an elementary abelian group. We have $m>1$. The action of the subgroup $Q$ on $P$ preserves this direct product, acting non-trivially and irreducibly on the elementary abelian group, and trivially on the cyclic group of order $p^m$. We have $|G: \mathbf{C}_G(P)| =q$.
\item $G$ is a semi-direct product of a (unique) normal Sylow $q$-subgroup $Q$ by a non-normal cyclic Sylow $p$-subgroup $P$. The subgroup $Q$ is special in the sense of Gorenstein~\cite[p.~183]{Gorenstein}, so $Q$ is either elementary abelian or $Q$ has nilpotency class $2$ with $Q'=Z(Q)=\Phi(Q)$ and $Q'$ is elementary abelian. Further, if $x \in P$ generates $P$ then $x$ acts irreducibly on $Q/Q'$ and trivially on $Q'$.
\end{enumerate}
\end{prevtheorem}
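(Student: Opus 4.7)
The plan is to extract structural constraints from the no-$p$-witness hypothesis, and then to split into cases according to which Sylow subgroup is normal. Write $e_p$ for the $p$-part of $\exp(G)$, and let $P$, $Q$ denote Sylow $p$- and Sylow $q$-subgroups of $G$. As a first observation, $P$ is proper (since $q \mid |G|$) with exponent $e_p$; if $P$ were non-abelian it would itself be a $p$-witness, so $P$ is abelian. The same reasoning shows any proper subgroup of $G$ containing an element of order $e_p$ is abelian; in particular $\mathbf{N}_G(P) = \mathbf{C}_G(P)$, and this is either all of $G$ or a proper abelian subgroup. A Sylow-counting argument, combined with the abelian-maximal-subgroup structure noted in the introduction, then shows that at least one of $P$, $Q$ is normal in $G$.

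Consider first $P \unlhd G$ and set $C = \mathbf{C}_G(P)$. If $C = G$, then $G = P \times Q$ with $Q$ non-abelian; since $\gcd(|P|,|Q|) = 1$, every subgroup of $G$ decomposes as a direct product, and avoiding $p$-witnesses of the form $P \times Q_0$ (for proper non-abelian $Q_0 \leq Q$) and $P_0 \times Q$ (for proper $P_0 \leq P$ of exponent $e_p$) forces $Q$ to be minimal non-abelian and $P$ to be cyclic: family~(i). If $C \lneq G$, then $C = P \times (C \cap Q)$ with $C \cap Q$ abelian (else $C$ is a $p$-witness); avoiding the $p$-witness $P \rtimes Q_0$ for $Q_0 \lneq Q$ with $Q_0 \not\leq C \cap Q$ forces every proper subgroup of $Q$ to lie in $C \cap Q$, so $C \cap Q$ is the unique maximal subgroup of $Q$, $Q$ is cyclic, and $|G : C| = q$. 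Decompose $P = \mathbf{C}_P(Q) \times [P,Q]$ by coprime action. If $\mathbf{C}_P(Q) = 1$, a module-theoretic argument on $P$ as a $\mathbb{Z}/p^m Q$-module shows that unless $P$ is homocyclic of exponent $p^m$ there is a proper $Q$-invariant summand of $P$ of exponent $p^m$, giving a $p$-witness; similarly any proper $Q$-invariant subspace of $P/P^p$ lifts to a $p$-witness, forcing $Q$ to act irreducibly on $P/P^p$: this is family~(ii). If $\mathbf{C}_P(Q) \neq 1$, then $[P,Q] \rtimes Q$ is proper non-abelian, so $\exp([P,Q]) < e_p$ and hence $\exp(\mathbf{C}_P(Q)) = e_p$; testing $(R \cdot [P,Q]) \rtimes Q$ for $R \lneq \mathbf{C}_P(Q)$ forces $\mathbf{C}_P(Q)$ to be cyclic of order $p^m$, testing $(\mathbf{C}_P(Q) \cdot S) \rtimes Q$ for proper $Q$-invariant $S \leq [P,Q]$ (including $S = [P,Q]^p$) forces $[P,Q]$ elementary abelian with $Q$ acting irreducibly, and $m > 1$ follows because otherwise $[P,Q] = 1$ and $G$ is abelian: this is family~(iii).

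In the remaining case $Q \unlhd G$ and $P$ non-normal, the elements of $P$ of order $e_p$ generate $P$, and if $P$ were not cyclic, some such $y$ would fail to centralize $Q$ (else $G = P \times Q$, forcing $P$ normal), giving the proper $p$-witness $\langle y \rangle Q$; hence $P = \langle x \rangle$ is cyclic. For any proper $x$-invariant subgroup $R$ of $Q$, the subgroup $R\langle x \rangle$ is proper with $p$-exponent $e_p$; avoiding a $p$-witness forces $R \langle x \rangle$ to be abelian, so every proper $x$-invariant subgroup of $Q$ is abelian and centralized by $x$. Applied to the characteristic subgroups $Q'$, $Z(Q)$, $\Phi(Q)$, and to $[Q, Q']$ (all proper in $Q$ when $Q$ is non-abelian), this yields $Q' = Z(Q) = \Phi(Q)$ with $Q'$ elementary abelian (the special condition), together with $x$ acting irreducibly on $Q/Q'$ and trivially on $Q'$: this is family~(iv). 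The converse for each family is then verified by a direct computation of the $p$-exponent of every proper non-abelian subgroup.

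The two principal technical difficulties are: first, establishing that some Sylow subgroup must be normal, which interleaves the no-$p$-witness hypothesis with Sylow counting and the abelian-maximal-subgroup structure of $G$; and second, the separation of families~(ii) and~(iii), where the precise abelian structure of $P$ (homocyclic versus cyclic-times-elementary-abelian) must be read off from the lattice of $Q$-invariant subgroups of $P$ together with their $p$-exponents.
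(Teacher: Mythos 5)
Your outline reproduces the paper's case division ($P$ normal with $Q$ normal, $P$ normal with $Q$ not centralizing $P$, $P$ non-normal) and most of the individual steps are sound, but two points are asserted in a way that would not survive being written out. First, the claim that ``a Sylow-counting argument\dots shows that at least one of $P$, $Q$ is normal'' is not right as stated: Sylow counting does not force a normal Sylow subgroup even in a solvable $\{p,q\}$-group ($S_4$ is the standard example). The step that actually works, and the one the paper uses, is that when $P$ is not normal every proper subgroup containing $P$ --- in particular $\mathbf{N}_G(P)$ --- is abelian, so $P\leq Z(\mathbf{N}_G(P))$ and Burnside's normal $p$-complement theorem produces the normal $Q$. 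You should name that tool; nothing about counting Sylow subgroups substitutes for it.

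Second, and more seriously, in family~(iv) you claim that applying ``$x$ centralizes every proper $x$-invariant subgroup of $Q$'' to $Q'$, $Z(Q)$, $\Phi(Q)$ and $[Q,Q']$ ``yields'' $Q'=Z(Q)=\Phi(Q)$ with $Q'$ elementary abelian and $x$ irreducible on $Q/Q'$. It does not: from that hypothesis you only get that $x$ centralizes each of these subgroups, not that they coincide, and nothing in your sketch shows $Q$ has class at most $2$ or that $Z(Q)\leq Q'$. The genuine content here is a theorem (the paper cites Gorenstein, Theorem 5.3.7): one needs the coprime-action identities ($Q=[Q,x]\mathbf{C}_Q(x)$, $[Q,x,x]=[Q,x]$) to show $\mathbf{C}_{Q/Q'}(x)$ is trivial and hence $\mathbf{C}_Q(x)=Q'=\Phi(Q)\geq Z(Q)$, and the three-subgroups lemma applied to $[x,Q',Q]$, $[Q',Q,x]$, $[Q,x,Q']$ to get $[Q,Q']=1$. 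Either cite the theorem, as the paper does, or supply this argument; as written the step is a gap.

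The rest of the proposal is correct and in one place genuinely different from the paper: you separate families~(ii) and~(iii) by whether $\mathbf{C}_P(Q)$ is trivial, using the coprime decomposition $P=\mathbf{C}_P(Q)\times[P,Q]$, whereas the paper decomposes $V=P/P^p$ as an $\mathbb{F}_pQ$-module relative to the submodule $P[p^{m-1}]/P^p$ and splits according to whether $Q$ acts non-trivially on a complement. Your dichotomy is arguably cleaner as an organizing principle (it makes the homocyclic versus cyclic-times-elementary-abelian distinction visible immediately), though the subsidiary claims --- that a non-homocyclic $P$ with $\mathbf{C}_P(Q)=1$ admits a proper $Q$-invariant subgroup of exponent $p^m$, and that $\mathbf{C}_P(Q)$ must be cyclic in the other case --- still require the same reduction to $P/P^p$ that the paper carries out, so the saving is organizational rather than substantive.
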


Again, we comment that the structure of the complement in part~(i) of this theorem, namely the structure of a minimal non-abelian $q$-group, is well understood; see~\cite[Section~2.1]{MM09} for example.

It remains to consider exponent-critical $p$-groups $P$. The problem naturally splits into two cases, depending on the number of abelian maximal subgroups of $P$. We say that an exponent-critical $p$-group $P$ is of \emph{type \cA} if $P$ has exactly one maximal subgroup which is abelian, and is of \emph{type \cB} if $P$ has more than one abelian maximal subgroup. (If $a\in P$ is an element of maximal order in a non-abelian exponent-critical group $P$, then every maximal subgroup of $P$ containing $a$ is abelian. So $P$ possesses at least one non-abelian maximal subgroup. Thus every exponent-critical $p$-group has type \cA\  or \cB.) The following theorem provides a characterisation of $p$-groups of type \cB:

\begin{prevtheorem}\label{prevthm:primetypeB}
A non-abelian finite $p$-group has type \cB\ if and only if it is $2$-generated with derived subgroup of order $p$.
\end{prevtheorem}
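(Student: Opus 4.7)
My plan is to prove the two directions separately, using the commutator form on $V=P/\Phi(P)$ as the main tool.

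For the (easier) backward direction, I assume $P$ is a non-abelian $2$-generated $p$-group with $|P'|=p$. Since $P'$ is a normal subgroup of order~$p$ in a $p$-group it lies in $Z(P)$, so $P$ has class~$2$. Using the identity $[x^p,y]=[x,y]^p$ valid in class-$2$ groups, together with $|P'|=p$, one checks that $\Phi(P)=P^pP'\leq Z(P)$ and that the commutator induces a well-defined non-zero alternating form $\beta\colon V\times V\to P'$ on the $2$-dimensional $\mathbb{F}_p$-space $V$. Every $1$-dimensional subspace of $V$ is totally isotropic for $\beta$, so each of the $p+1$ maximal subgroups of $P$ is abelian, making $P$ minimal non-abelian. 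In particular $P$ has no proper non-abelian subgroups, so it is vacuously exponent-critical and of type~\cB.

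For the forward direction, suppose $P$ is of type~\cB\ and pick two distinct abelian maximal subgroups $M_1,M_2$. Since $M_1M_2=P$ and $M_1\cap M_2$ is centralised by both $M_i$, we have $M_1\cap M_2\leq Z(P)$; combined with the fact that $P/Z(P)$ is never cyclic for a non-abelian~$P$, this forces $|P/Z(P)|=p^2$. So $P$ has class~$2$, and the commutator form argument above (this time applied to $P/Z(P)\cong(\mathbb{Z}/p)^2$) yields $|P'|=p$.

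It remains to show that $P$ is $2$-generated, which I expect to be the main obstacle. Suppose for contradiction that $V$ has dimension at least~$3$, and let $p^e=\exp(P)$. The key step is to produce a non-central element $x\in P$ of order $p^e$: if some element of maximal order already lies outside $Z(P)$, take it as $x$; otherwise every element of order $p^e$ lies in $Z(P)$, and for any central element $w$ of order $p^e$ and any $u\in P\setminus Z(P)$ (for which necessarily $o(u)\leq p^{e-1}$), the product $x:=uw$ satisfies $o(x)=p^e$ and $x\notin Z(P)$. Now choose $y\in P$ with $[x,y]\neq 1$ and set $H=\langle x,y\rangle$. Then $H$ is non-abelian with $\exp(H)=p^e$, and its image in $V$ has dimension at most~$2$, so $H\Phi(P)\subsetneq P$ and hence $H\subsetneq P$. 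Thus $H$ is a $p$-witness for $P$, contradicting exponent-criticality.
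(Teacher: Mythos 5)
Your proposal is correct and follows essentially the same route as the paper's: the backward direction shows the group is minimal non-abelian by getting $\Phi(P)\leq Z(P)$, and the forward direction combines the ``non-central element of maximal order'' witness argument (which the paper isolates as Theorem~\ref{thm:basic_pgroup}) to force $2$-generation with the observation that two distinct abelian maximal subgroups make $P'$ central and cyclic of order $p$. The only cosmetic difference is that you derive $|P'|=p$ from $|P/Z(P)|=p^2$ and the alternating commutator form on $P/Z(P)$, whereas the paper gets it from $[a,b]^p=[a,b^p]=1$ using that $b^p\in\Phi(P)$ lies in the abelian maximal subgroup containing the maximal-order generator $a$.
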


Finally, we consider exponent-critical $p$-groups of type ${\cal A}$. We construct a `universal' group as a quotient $U/{D}$ of a certain semidirect product $U$. In order to state our result we now define this semidirect product.

\begin{Def}\mbox{}
\label{def:W_U}

\begin{itemize}
\item Let $W$ be the abelian group defined by
\[
W=\langle a_0\rangle\times \langle a_1\rangle\times\cdots \times\langle a_{p-1}\rangle
\]
where $a_0$ has order $p^m$ and $a_i$ has order $p^{m-1}$ for $1\leq i\leq p-1$. 
\item Let $\phi:W\rightarrow W$ be the automorphism (see Lemma \ref{lem:automorphism}) of $W$ such that
\[
\phi(a_i)=\begin{cases}a_{i}a_{i+1}&\text{ for }0\leq i\leq p-2\\
a_{i}\prod_{j=1}^{p-1}a_{j}^{-\binom{p}{j}}&\text{ for }i=p-1.
\end{cases}
\]
\item Let $\langle b_0\rangle$ be the cyclic group generated by an element $b_0$ of order $p^{m-1}$. Define $U$ to be the semi-direct product $U=W\rtimes \langle b_0\rangle$, where $b_0$ acts on $W$ via the automorphism $\phi$. So $b_0^{-1} w b_0=\phi(w)$ for all $w\in W$.
\item Define $D\mathrel{\unlhd} U$ to be the normal subgroup of order $p$ generated by the element $a_0^{p^{m-1}}a_{p-1}^{p^{m-2}}$.
\item Let $\cal N$ be the set of normal subgroups $N$ of $U$ such that:
\[
{D}\leq N\leq \Phi(U),\text{ }
N\cap \langle a_0\rangle=\{1\}, \text{ and }
U'\not\leq N.
\]
\end{itemize}
\end{Def}

\begin{prevtheorem}\label{prevthm:primetypeA}

\begin{enumerate}[\rm{(}i\rm{)}]
\item Let $P$ be an exponent-critical $p$-group of exponent $p^m$ and type $\cal A$. Then $P$ is isomorphic to $U/N$, where $N\in {\cal N}$.
\item Any group of the form $U/N$ with $N\in{\cal N}$ is a non-abelian exponent-critical $p$-group of exponent $p^m$.
\item There are no exponent-critical $2$-groups of exponent $2^2$ and type $\cal A$. When $p$ is odd or $m\geq 3$, the converse to~{\rm(i)} holds: Any group of the form $U/N$ with $N\in{\cal N}$ is an exponent-critical $p$-group of exponent $p^m$ and type $\cal A$.
\end{enumerate} 
\end{prevtheorem}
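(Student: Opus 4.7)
The plan is to treat the three parts in sequence: for (i) I construct a surjective homomorphism $U\to P$ with kernel in ${\cal N}$, and for (ii)--(iii) I analyse the quotients $U/N$ directly, exploiting the automorphism $\phi$.

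For (i), let $A$ be the unique abelian maximal subgroup of $P$, and choose $a_0\in A$ of order $p^m$ and $b\in P\setminus A$. Since any maximal subgroup of $P$ containing an element of order $p^m$ must be abelian (otherwise it is a proper non-abelian subgroup of exponent $p^m$, violating exponent-criticality), every maximal subgroup containing $a_0$ equals $A$, so $\langle a_0,b\rangle$ lies in no maximal subgroup and therefore $P=\langle a_0,b\rangle$. The same reasoning applied to $b$ gives $o(b)\leq p^{m-1}$. Setting $a_{i+1}=[a_i,b]$ puts every $a_i$ in $A$; writing $\phi$ for conjugation by $b$ on $A$ (additively), we have $a_i=(\phi-1)^i(a_0)$, and because $b^p\in A$, $\phi^p=1$, which applied to $a_0$ yields exactly $a_p=-\sum_{j=1}^{p-1}\binom{p}{j}a_j$. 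The bound $o(a_1)\leq p^{m-1}$ follows by applying exponent-criticality to $\langle a_1,b\rangle$, which is proper because $a_1\in P'\subseteq\Phi(P)$, so its image in $P/\Phi(P)\cong C_p\times C_p$ is the cyclic subgroup $\langle b\Phi(P)\rangle$; the bound $o(a_i)\leq o(a_{i-1})$ for $i\geq 2$ then follows from $a_i=\phi(a_{i-1})-a_{i-1}$ in the abelian group $A$. The remaining relation $a_0^{p^{m-1}}a_{p-1}^{p^{m-2}}=1$ comes from $(a_0b)^{p^{m-1}}=1$ (since $a_0b\notin A$): collection gives $(a_0b)^{p^{m-1}}=b^{p^{m-1}}\prod_{i=1}^{p^{m-1}}\phi^i(a_0)$, and because $\phi^p=1$ one has $\sum_{i=1}^{p^{m-1}}\phi^i=p^{m-2}\sum_{r=0}^{p-1}\phi^r$, which applied to $a_0$ and simplified using the bounds $p^{m-1}a_j=0$ for $j\geq 1$ produces precisely $p^{m-1}a_0+p^{m-2}a_{p-1}$. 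All defining relations of $U$ are therefore satisfied, giving a surjection $\psi\colon U\to P$ with $\psi(a_i)=a_i$, $\psi(b_0)=b$. The kernel $N$ contains $D$, satisfies $N\cap\langle a_0\rangle=\{1\}$ (as $\psi$ preserves the order of $a_0$), $U'\not\leq N$ (as $P$ is non-abelian), and $N\leq\Phi(U)$ (since $\psi$ induces an isomorphism $U/\Phi(U)\cong P/\Phi(P)$).

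For (ii), let $M=W\langle b_0^p\rangle$; since $\phi^p=1$ this is an abelian maximal subgroup of $U$. The same collection computation as in (i) shows that $(wb_0^j)^{p^{m-1}}\in D\leq N$ whenever $w\in W$ and $p\nmid j$, so every element of $U/N$ outside $\bar M:=MN/N$ has order at most $p^{m-1}$; in particular every element of order $p^m$ in $U/N$ lies in $\bar M$. Since $MN\neq U$ (else $U/N$ would be abelian, contradicting $U'\not\leq N$), $\bar M$ is a maximal subgroup of $U/N$. Suppose $H$ were a proper non-abelian subgroup of $U/N$ of exponent $p^m$; pick $x\in H$ of order $p^m$ and $y\in H$ not commuting with $x$, so $x\in\bar M$ and $y\notin\bar M$. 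Because $N\leq\Phi(U)$ we have $\Phi(U/N)=\Phi(U)/N$, which is abelian of exponent $p^{m-1}$, so $x\notin\Phi(U/N)$. The images of $x$ and $y$ in $(U/N)/\Phi(U/N)\cong\mathbb{F}_p^2$ are therefore linearly independent (one non-zero in the $1$-dimensional subspace $\bar M/\Phi(U/N)$, the other outside), whence $\langle x,y\rangle=U/N$ by Burnside's basis theorem, contradicting the properness of $H$. Non-abelianness and exponent $p^m$ of $U/N$ follow from $U'\not\leq N$ and $N\cap\langle a_0\rangle=\{1\}$ respectively.

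For (iii), since $U/N$ is two-generated, Theorem~D reduces the question of whether $U/N$ is type \cA\ or \cB\ to whether $|(U/N)'|>p$. Suppose $|U'/(U'\cap N)|=p$; then $(U/N)'$ is cyclic of order $p$ and hence central in $U/N$, so $[U,U']\leq N$. The identities $[b_0,a_i]=a_{i+1}^{-1}$ for $1\leq i\leq p-1$ force $a_2,\ldots,a_{p-1},a_p\in N$, and combined with $a_p=-\sum_{l=1}^{p-1}\binom{p}{l}a_l$ this gives $a_1^p\in N$. Therefore $a_{p-1}^{p^{m-2}}\in N$: for $p$ odd it is a power of $a_{p-1}\in N$, and for $p=2$ it equals $a_1^{2^{m-2}}\in\langle a_1^2\rangle\subseteq N$ provided $m\geq 3$. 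Combined with $D\leq N$, this forces $a_0^{p^{m-1}}\in N\cap\langle a_0\rangle=\{1\}$, contradicting $o(a_0)=p^m$. Thus $|(U/N)'|>p$ whenever $p$ is odd or $m\geq 3$, giving type \cA. In the exceptional case $(p,m)=(2,2)$, $|U'|=2$ so every non-abelian $U/N$ has derived subgroup of order exactly $2$ and is type \cB; together with (i), no type \cA\ exponent-critical $2$-group of exponent $4$ can exist. The main technical obstacle is the collection-formula computation in (i) yielding $a_0^{p^{m-1}}a_{p-1}^{p^{m-2}}=1$: it requires the careful analysis of $\sum_{r=0}^{p-1}\phi^r$ on $A$ combined with the precise orders of the $a_i$, and once this identity is in place the remainder of the proof reduces to formal calculations in the presentation of $U$.
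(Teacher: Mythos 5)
Your overall strategy matches the paper's: part (i) by exhibiting $P$ as a quotient of $U$ through verification of the defining relations, with the key identity $a_0^{p^{m-1}}a_{p-1}^{p^{m-2}}=1$ extracted from $(a_0b)^{p^{m-1}}=1$ by collection; part (ii) by confining all elements of order $p^m$ to the abelian maximal subgroup $M/N$ where $M=W\langle b_0^p\rangle$; part (iii) by reducing the type ${\cal A}$/${\cal B}$ dichotomy to the size of the derived subgroup via Theorem~\ref{prevthm:primetypeB}. Two of your arguments genuinely diverge. In part (iii) the paper proves the stronger fact $N\cap U'=\{1\}$ (via Lemma~\ref{lem:basic_U_facts}(vii): every non-trivial normal subgroup of $U$ inside $U'$ contains $a_{p-1}^{p^{m-2}}$), so that $|(U/N)'|=p^{(m-1)(p-1)}$; you instead assume $|(U/N)'|=p$, deduce $[U,U']\leq N$, and chase the relations to the same contradiction $a_0^{p^{m-1}}\in N$. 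Your version is weaker but suffices, and your disposal of $(p,m)=(2,2)$ via $|U'|=2$ is arguably cleaner than the paper's order count. In part (ii) you invoke Burnside's basis theorem on the images of $x$ and $y$ in $(U/N)/\Phi(U/N)$ where the paper argues that a maximal subgroup containing an element of order $p^m$ must equal $M/N$; these are essentially equivalent.

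Three steps need shoring up. First, your justification of $o(a_1)\leq p^{m-1}$ is incomplete: exponent-criticality applied to the proper subgroup $\langle a_1,b\rangle$ yields nothing if that subgroup happens to be abelian. The fix is immediate: $a_1\in P'\leq\Phi(P)$ lies in every maximal subgroup, in particular in a non-abelian one (which exists since $P$ has type ${\cal A}$), and that subgroup has exponent at most $p^{m-1}$; this is the content of Lemma~\ref{lem:Type_B_basic_facts}(iii). Second, in part (ii) you ``pick $x\in H$ of order $p^m$ and $y\in H$ not commuting with $x$''; this presupposes that $H$ contains a \emph{non-central} element of order $p^m$, which is not automatic and needs the replacement argument of Theorem~\ref{thm:basic_pgroup} (if every maximal-order element of $H$ were central in $H$, multiply one by a non-central element to contradict this). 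Third, and most substantively, the assertion that $(wb_0^j)^{p^{m-1}}\in D$ for all $w\in W$ and $p\nmid j$ is not literally ``the same collection computation as in (i)'': the computation in (i) concerns the single element $a_0b$, and extending it to arbitrary $w$ requires decomposing $w$ over the $a_i$ and checking that the contributions from $a_1,\dots,a_{p-1}$ vanish (because $p^{m-2}a_{i+p-1}=0$ for $i\geq 1$, using $a_p\in (U')^p$ and $\exp(U')=p^{m-1}$), leaving only the $a_0$-coefficient of $w$ times the generator of $D$. This is exactly what the paper's subgroup $S=\{x\in M:x^{p^{m-1}}[x,_{p-1}b_0]^{p^{m-2}}\in D\}$ and its verification on generators accomplish; the claim is true, but it is the one genuinely non-formal step you have asserted rather than proved.
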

In particular, this theorem shows that when $p$ is odd or $m\geq 3$ there exists a unique maximal exponent-critical $p$-group $U/D$ of exponent $p^m$ and type $\cal A$. We believe these $p$-groups are particularly interesting. 

We should mention that the characterization of exponent-critical groups has applications to explicit computations involving varieties of groups~\cite{CS2020, CS2021}; this was how we originally came across this problem.

The rest of the paper proceeds as follows. Section \ref{sec proofs 4-3} contains a proof of Theorem \ref{prevthm:4primes} and Theorem \ref{prevthm:3primes}. In Section \ref{sec pq} we prove Theorem \ref{prevthm:2primes}.  Finally, in Section \ref{sec p}, we characterize exponent-critical $p$-groups and prove Theorems \ref{prevthm:primetypeB} and \ref{prevthm:primetypeA}. 

\section{Exponent-critical groups of order divisible by three primes.}\label{sec proofs 4-3}

We will prove Theorem $A$ and $B$ in this section.

\begin{proof}[\bf{Proof of Theorem \ref{prevthm:4primes}}.]
Let $G$ be a non-abelian exponent-critical finite group. We mentioned in the introduction that $G$ is solvable. Suppose that the order of $G$ is divisible by four or more primes.
Since $G$ is non-abelian, it must be the case that some Sylow $p$-subgroup $P$ of $G$ is not central. Let $q$ be a prime dividing the order of $G$ such that $\mathbf{C}_G(P)$ does not contain a Sylow $q$-subgroup of $G$. Let $H_{pq}$ be a $pq$-Hall subgroup of $G$, which is necessarily non-abelian. For any other prime $r$ dividing $|G|$, a Hall $pqr$-subgroup $H_{pqr}$ of $G$ is therefore non-abelian. Moreover, $H_{pqr}$ is proper since $|G|$ is divisible by four or more primes. So $H_{pqr}$ is a $p$-witness, a $q$-witness and an $r$-witness for $G$. So there exists an $\ell$-witness for $G$ for all primes $\ell$ dividing the order of $G$, which contradicts $G$ being exponent-critical.
\end{proof}

\begin{Thm}\label{thm:3primes} Let $G$ be a non-abelian, exponent-critical group. If the order of $G$ is divisible by three distinct primes, then $G$ has a non-trivial central cyclic Sylow subgroup and all Sylow subgroups of $G$ are abelian. 
\end{Thm}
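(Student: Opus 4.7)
My plan begins by applying Theorem~\ref{prevthm:4primes}: the order of $G$ is divisible by exactly three primes, which I label $p$, $q$, $r$. Since $G$ is solvable (as noted in the introduction), Hall $\pi$-subgroups exist for every set of primes $\pi$, and every $\pi$-subgroup is contained in a Hall $\pi$-subgroup. Exponent-criticality delivers some prime, which I may take to be $p$ after relabelling, such that $G$ admits no $p$-witness. The first move is to locate a Sylow $p$-subgroup $P$ and show in succession that $P$, a Hall $\{p,q\}$-subgroup $H_{pq}$ containing $P$, and a Hall $\{p,r\}$-subgroup $H_{pr}$ containing $P$ are all abelian: each is a proper subgroup of $G$ (since $G$ has three prime divisors), and each has $p$-part of exponent equal to $\exp(G)_p$, so a non-abelian example would supply a $p$-witness.

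From $H_{pq},H_{pr}\leq \mathbf{C}_G(P)$ I read off that $|\mathbf{C}_G(P)|$ is divisible by $\operatorname{lcm}(|H_{pq}|,|H_{pr}|)=|G|$, so $P\leq Z(G)$. Schur--Zassenhaus, combined with the centrality of $P$, then gives an internal direct decomposition $G=P\times H$ with $H$ a Hall $\{q,r\}$-subgroup, and $H$ is non-abelian because $G$ is and $P$ is not. I next claim $P$ must be cyclic: if not, I can select a proper subgroup $P_1<P$ with $\exp(P_1)=\exp(P)$, and then $P_1\times H$ is a proper non-abelian subgroup of $G$ whose $p$-part of exponent is still $\exp(G)_p$, contradicting the absence of a $p$-witness.

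Finally, to show every Sylow subgroup of $G$ is abelian it remains to handle the Sylow $q$- and $r$-subgroups; both sit inside $H$ because $|P|$ is coprime to $qr$. If the Sylow $q$-subgroup $Q$ were non-abelian, then $P\times Q$ would be a subgroup of $P\times H=G$ that is proper (because $r\mid |G|$), non-abelian, and whose $p$-part of exponent is $\exp(G)_p$, giving a $p$-witness; the same argument rules out a non-abelian Sylow $r$-subgroup. I do not foresee a genuinely hard step: the entire proof is a repeated application of the slogan \emph{any proper non-abelian subgroup containing a full Sylow $p$-subgroup is a $p$-witness}, with the direct product decomposition $G=P\times H$ (which is where centrality of $P$ is essential) being what allows me to freely assemble the subgroups $P_1\times H$ and $P\times Q$ used to derive the successive contradictions.
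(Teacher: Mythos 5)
Your proof is correct and uses essentially the same machinery as the paper: solvability of $G$, existence and conjugacy of Hall subgroups, and the repeated observation that a proper non-abelian subgroup containing a full Sylow $\ell$-subgroup would be an $\ell$-witness. The only difference is organizational --- you pivot on the witness-free prime $p$ from the outset and extract the decomposition $G=P\times H$ immediately, whereas the paper starts from a non-abelian Hall $\{p,q\}$-subgroup and deduces that the two Hall subgroups involving the third prime must be abelian --- but the mathematical content is the same.
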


\begin{proof}
By the same reasoning as in the proof of Theorem \ref{prevthm:4primes}, the group $G$ has a non-abelian $pq$-Hall subgroup for two primes $p$ and $q$ dividing $|G|$. This subgroup is proper as three primes divide the order of $G$, and so it is a $p$-witness and $q$-witness for $G$. Suppose $r$ is the third prime dividing $|G|$. If either the $pr$- or $qr$-Hall subgroups are non-abelian, then we have an $r$-witness for $G$ and so $G$ is not exponent-critical. Therefore both these Hall subgroups are abelian. Hence all Sylow subgroups of $G$ are abelian, and the Sylow $r$-subgroup centralizes a Sylow $p$- and Sylow $q$-subgroup of $G$. If the Sylow $r$-subgroup is not cyclic, then $G$ would contain a proper subgroup which is an $r$-witness for $G$, which would imply that $G$ is not exponent-critical.
\end{proof}

We now prove Theorem \ref{prevthm:3primes}, which classifies exponent-critical groups whose orders are divisible by only $3$ distinct primes. 

\begin{proof}[\bf{Proof of Theorem \ref{prevthm:3primes}}]
Suppose $G$ is exponent-critical. Then $G$ has a central cyclic Sylow $p$-subgroup $P$ by Theorem~\ref{thm:3primes}. Let $H$ be a Hall $qr$-subgroup of $G$. Since $P$ is central, $G$ is a direct product of $P$ and $H$. If $H$ were abelian, then $G$ would be abelian. Hence $H$ is non-abelian. Suppose by way of contradiction that $H$ has a non-abelian proper subgroup $K$. Then $PK\le G$ and $\exp(G)$ is the least common multiple of $\exp(PK)$ and $\exp(H)$, which implies that $G$ is not exponent-critical. Hence $H$ is minimal non-abelian.

Conversely, suppose that $G$ is a direct product of a cyclic $p$-group $P$ and a minimal non-abelian $qr$-group $H$. Any proper subgroup of $G$ containing $P$ is abelian, since $H$ is minimal non-abelian. So no subgroup of $G$ is a $p$-witness for $G$, and hence $G$ is exponent-critical.
\end{proof} 

\section{Exponent-critical groups of order divisible by exactly two primes} \label{sec pq} 

Before proving Theorem~\ref{prevthm:2primes},  we show that each of the four families of groups given in Theorem \ref{prevthm:2primes} consists of exponent-critical groups, by showing that there is no $p$-witness for any group $G$ in the family. 

\begin{Lem}\label{lem:2primesconv_a}
Let  $p$ and $q$ be distinct primes. Let $G$ be a direct product of a normal cyclic Sylow $p$-subgroup $P$ and a minimal non-abelian $q$-subgroup $Q$. Then $G$ is exponent-critical.
\end{Lem}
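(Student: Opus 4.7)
The plan is to verify the equivalent condition noted just before the lemma: produce a prime $\ell$ for which $G$ has no $\ell$-witness. The natural choice is $\ell = p$, so I would aim to show that every proper non-abelian subgroup of $G$ has $p$-part of its exponent strictly smaller than the $p$-part of $\exp(G) = |P|\exp(Q)$.

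The key structural step is the observation that, because $P$ and $Q$ are normal in $G$ of coprime order, every subgroup $H \leq G$ decomposes as an internal direct product $H = H_P \times H_Q$, where $H_P = H \cap P$ is the Sylow $p$-subgroup of $H$ and $H_Q = H \cap Q$ is the Sylow $q$-subgroup of $H$. This follows from the standard fact that the Sylow subgroups of a nilpotent section are characteristic (hence normal) and commute pairwise; I would simply invoke it.

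With the decomposition in hand, two easy observations finish the argument. First, since $P$ is cyclic, $H_P$ is abelian, and it commutes with $H_Q$ because the factors $P$ and $Q$ centralize each other; so $H$ is non-abelian if and only if $H_Q$ is. Second, $\exp(H) = \exp(H_P)\exp(H_Q)$, with the two factors coprime, so the $p$-part of $\exp(H)$ is exactly $|H_P|$.

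Now suppose, for contradiction, that $H$ is a $p$-witness. Matching $p$-parts with $\exp(G)$ forces $H_P = P$, while the requirement that $H$ is non-abelian together with the minimal non-abelian hypothesis on $Q$ forces $H_Q = Q$. Hence $H = P \times Q = G$, contradicting properness. No step here is genuinely difficult; the only mild point of care is insisting on the internal direct-product decomposition of $H$, which is exactly where the coprimality of $|P|$ and $|Q|$ is used.
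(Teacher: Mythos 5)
Your proof is correct and follows essentially the same route as the paper's: both exploit the coprime direct-product decomposition $H=(H\cap P)\times(H\cap Q)$, use the cyclicity of $P$ to force $H\cap P=P$ from the $p$-witness condition, and invoke minimal non-abelianness of $Q$. The only cosmetic difference is that the paper derives the contradiction from $H$ being abelian, while you derive it from $H=G$ violating properness.
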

\begin{proof} Suppose, for a contradiction, $H$ is a $p$-witness for $G$. Since $P$ is cyclic,  $P\leq H$. Since $H$ is proper, $H=P\times K$ for some proper subgroup $K$ of $Q$. Since $Q$ is minimal non-abelian, $K$ is abelian. But then $H$ is abelian, so $H$ is not a $p$-witness for $G$. This contradiction shows that $G$ is exponent-critical.
\end{proof}

\begin{Lem}\label{lem:2primesconv_ba}
Let $p$ and $q$ be distinct primes. Suppose $G$ is a non-abelian semi-direct product of a normal abelian Sylow $p$-subgroup $P$ of $G$ by a cyclic $q$-complement $Q$. Suppose that $P$ is the direct product of cyclic groups of order $p^m$, where $m$ is a positive integer. Furthermore, suppose that $|G:\mathbf{C}_G(P)|=q$ and $Q$ acts non-trivially and irreducibly on $P/P^p$. Then $G$ is exponent-critical.
\end{Lem}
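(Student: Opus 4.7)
The plan is to show that $G$ has no $p$-witness, where $p^m = \exp(P)$ is the $p$-part of $\exp(G)$. Suppose for contradiction that $H$ is a proper non-abelian subgroup of $G$ whose exponent has $p$-part $p^m$, and let $h \in H$ realize this $p$-part. Since $P$ is the unique Sylow $p$-subgroup of $G$ (being normal), the $p$-part of $h$ lies in $P_1 := H \cap P$ and has order $p^m$; fix such an element $a \in P_1$.

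The next step is to force $HP = G$. Since $|G : \mathbf{C}_G(P)| = q$ and $P \leq \mathbf{C}_G(P)$, the subgroup $\mathbf{C}_G(P) \cap Q$ has index $q$ in $Q$ and so equals the unique maximal subgroup $Q^q$ of the cyclic $q$-group $Q$. Every subgroup of $Q$ is comparable with $Q^q$, so the image $HP/P \leq Q$ is either contained in $Q^q$---in which case $H$ centralizes $P_1$ and is therefore abelian, a contradiction---or equals all of $Q$, giving $HP = G$.

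Given $HP = G$, I will observe that $P_1$ is normalized by $P$ (as $P$ is abelian) and by $H$ (since $P \lhd G$), hence by all of $G$. Thus $P_1$ is $Q$-invariant, so its image in $P/P^p$ is a $Q$-submodule. By the irreducibility of $Q$ on $P/P^p$, this image is either trivial or all of $P/P^p$. In the first case $P_1 \leq P^p$, which has exponent $p^{m-1}$ because $P$ is a direct product of cyclic groups of order $p^m$---contradicting $o(a) = p^m$. In the second case $P_1 P^p = P$; since $P$ is abelian, $P^p = \Phi(P)$, so the Burnside basis theorem forces $P_1 = P$, whence $H \supseteq P$ and $H = HP = G$, contradicting that $H$ is proper.

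I do not expect any serious obstacle: the argument is a direct combination of the cyclicity of $Q$, the irreducibility hypothesis on $P/P^p$, and the homogeneity of $P$. The only minor subtlety is that $P_1$ must be shown to be $G$-normal rather than merely $H$-normal, but this is immediate once $HP = G$ is in hand.
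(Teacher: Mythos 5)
Your proof is correct and follows essentially the same route as the paper's: force the $q$-part of $H$ to act as all of $Q$ using $|G:\mathbf{C}_G(P)|=q$ and the cyclicity of $Q$, then apply irreducibility on $P/P^p$ together with the Frattini argument and the fact that $P^p$ has exponent $p^{m-1}$ to conclude $P_1=P$. The only cosmetic difference is that the paper conjugates $H$ so its $q$-part lies in $Q$, whereas you work with $HP/P$ and establish that $P_1$ is normal in all of $G$; your claim that ``$H$ centralizes $P_1$ and is therefore abelian'' deserves the one-line justification that $H/P_1\cong HP/P$ is cyclic (or simply that $\mathbf{C}_G(P)$ is abelian), but this is not a gap.
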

\begin{proof}
Suppose, for a contradiction, that $H$ is a $p$-witness for $G$. We see that $H=P_1Q_1$ where $P_1=P\cap H$ and where $Q_1$ is a $q$-group. By replacing $H$ by a conjugate if necessary, we may assume that $Q_1\leq Q$. Since $H$ is non-abelian, $Q$ is cyclic, and $|G:\mathbf{C}_G(P)|=q$, we see that $Q_1=Q$. Since $P$ is normal and $Q\leq H$, we see that $P_1$ is normalised by $Q$. Because $H$ has exponent divisible by the $p$-part $p^m$ of the exponent of $G$, the quotient $P_1P^p/P^p$ is non-trivial. The action of $Q$ on $P/P^p$ is irreducible and $P_1P^p/P^p$  is $Q$-invariant, and hence $P_1P^p/P^p=P/P^p$. So $P$ is generated by $P_1$ and $P^p$, and hence $P_1$ generates $P$ (as $P^p=\Phi(P)$). Hence $P_1=P$. Thus $H=P_1Q_1=PQ=G$, and so $H$ is not proper. This contradiction establishes the lemma.
\end{proof}

\begin{Lem}\label{lem:2primesconv_bb}
Let $p$ and $q$ be distinct primes. Moreover, suppose $G$ be a non-abelian semi-direct product of a normal abelian Sylow $p$-subgroup $P$ of $G$ by a cyclic $q$-complement $Q$. For $m>1$, suppose that $P$ is the direct product of a cyclic group of order $p^m$, and an elementary abelian $p$-group. Suppose $Q$ centralises the cyclic group of order $p^m$, and acts irreducibly on the elementary abelian group. Furthermore, suppose that $|G:\mathbf{C}_G(P)|=q$. Then $G$ is exponent-critical. 
\end{Lem}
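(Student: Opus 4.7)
The plan is to argue by contradiction, following the pattern of Lemma~\ref{lem:2primesconv_ba} but carefully exploiting the decomposition $P = C\times E$ where $C=\langle c\rangle$ has order $p^m$ and $E$ is the elementary abelian piece. Assume $H$ is a $p$-witness for $G$ and write $H = P_1 Q_1$ with $P_1=P\cap H$ and $Q_1$ a Sylow $q$-subgroup of $H$, replacing $H$ by a conjugate so that $Q_1\leq Q$. Exactly as in the previous lemma, since $Q$ is cyclic and its unique maximal subgroup lies in $\mathbf{C}_G(P)$ (because $|G:\mathbf{C}_G(P)|=q$), a proper $Q_1\lneq Q$ would force $H\leq\mathbf{C}_G(P)$ and hence $H$ abelian. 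So $Q_1=Q$; in particular $P_1\unlhd H$ and $P_1$ is $Q$-invariant.

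Next I would locate a useful element of $P_1$. Since $H/P_1$ embeds in $Q$, which is a $q$-group, every element of $H$ of $p$-power order lies in $P_1$; because $H$ is a $p$-witness, $P_1$ must contain an element $h$ of order $p^m$. Writing $h = c^i e$ with $c^i\in C$ and $e\in E$, the facts that $e^p = 1$ and $m>1$ force $o(c^i)=p^m$, so that $c^i$ generates $C$. This is the one place in the argument where the hypothesis $m>1$ is genuinely used.

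The core of the proof is then a case split on the $Q$-invariant subgroup $P_1\cap E$ of $E$, which by irreducibility is either $E$ or $\{1\}$. If $P_1\cap E = E$, then $E\leq P_1$ and $hE$ generates $P/E\cong C$, forcing $P_1 = P$ and hence $H=G$, a contradiction. If instead $P_1\cap E = \{1\}$, the projection $P_1\to C$ is injective, so $P_1$ is cyclic; containing the element $h$ of order $p^m$, it must equal $\langle h\rangle$. For $y\in Q$, since $Q$ centralises $C$, we have $y^{-1}hy = c^i (y\cdot e)$, which lies in the $Q$-invariant $P_1$; therefore $h^{-1}(y^{-1}hy) = (y\cdot e)e^{-1}$ lies in $P_1\cap E = \{1\}$, giving $e\in\mathbf{C}_E(Q)$. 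Since $|G:\mathbf{C}_G(P)|=q$ and $Q$ centralises $C$, $Q$ must act non-trivially on $E$; combined with irreducibility this yields $\mathbf{C}_E(Q)=\{1\}$, so $e=1$ and $P_1 = C$. But then $H = C\times Q$ is abelian, again a contradiction.

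The main obstacle is a bookkeeping one in the cyclic case $P_1\cap E=\{1\}$: one must combine $Q$-invariance of $P_1$ with the trivial action of $Q$ on $C$ to manufacture a fixed point of $Q$ in $E$, and only then invoke irreducibility. The rest is routine manipulation in the semidirect product $P\rtimes Q$, parallel to the proof of Lemma~\ref{lem:2primesconv_ba}.
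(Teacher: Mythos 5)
Your proof is correct and follows essentially the same route as the paper's: reduce to $H=P_1Q$ with $P_1$ a $Q$-invariant subgroup of $P$, exploit the commutator of a suitable element of $P_1$ with a generator of $Q$ together with irreducibility and $\mathbf{C}_E(Q)=\{1\}$ to force $E\leq P_1$, and then use an element of order $p^m$ (the one place $m>1$ matters) to conclude $P_1=P$. The only difference is organizational: the paper derives $P_1\cap E\neq\{1\}$ directly from the non-abelianness of $H$ via the commutator $[g,y]$, whereas you run the same commutator computation inside a dichotomy on $P_1\cap E$, disposing of the case $P_1\cap E=\{1\}$ by showing $P_1$ would then be cyclic, centralised by $Q$, and hence $H$ abelian.
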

\begin{proof}
We may write $P=C\times A$ where $C$ is a cyclic group of order $p^m$ and where $A$ is elementary abelian; $Q$ centralises $C$ and acts irreducibly by conjugation on $A$. We choose a generator $y$ for $Q$, so $Q=\langle y\rangle$. 

Suppose, for a contradiction, that $H$ is a $p$-witness for $G$. As in Lemma~\ref{lem:2primesconv_ba}, we may assume that $H=P_1Q$ where $P_1$ is normalised by $Q$.

Since $H$ is non-abelian, there exists an element $g:=ca\in P_1$ where $c\in C$ and $a\in A\setminus\{1\}$. Now $Q$ acts non-trivially on $A$, since $G$ is non-abelian. Since $Q$ acts irreducibly and non-trivially on $A$, we deduce that $\mathbf{C}_A(y)=\{1\}$ and so $[g,y] \in A\setminus \{1\}$. Since $P_1$ is normalised by $Q$, we see that $[g,y]\in P_1$ and so
$P_1\cap A\not=\{1\}$. But $Q$ acts irreducibly on $A$ and so $A\leq P_1$. Since $p^m$ divides the exponent of $H$, and since $m>1$, we see that there exists an element $x\in P_1$ such that $x\notin AP^p$. Since $AP^p$ has index $p$ in $P$, we find that $\langle x\rangle AP^p=P$ and so $\langle x\rangle A=P$ (since $P^p=\Phi(P)$). But  $\langle x\rangle A\leq P_1$ and so $P_1=P$.  Thus $H=P_1Q_1=PQ=G$, and so $H$ is not proper. This contradiction establishes the lemma.
\end{proof}

\begin{Lem}\label{lem:2primesconv_c}
Let $p$ and $q$ be distinct primes. Let $G=Q\rtimes P$ be a (non-abelian) semi-direct product of a Sylow $q$-subgroup $Q$ by a non-normal cyclic Sylow $p$-subgroup $P$. Suppose that $Q$ is special, so either $Q$ is elementary abelian or $Q'=Z(Q)=\Phi(Q)$. Moreover, suppose that $P$ acts (by conjugation) trivially on $Q'$ and irreducibly on $Q/Q'$, Then $G$ is exponent-critical.
\end{Lem}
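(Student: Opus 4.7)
The plan is to follow the same template established by Lemmas~\ref{lem:2primesconv_a}--\ref{lem:2primesconv_bb}: suppose for contradiction that $H$ is a $p$-witness for $G$, and deduce $H=G$.

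The first step is to locate a Sylow $p$-subgroup of $G$ inside $H$. Since $P$ is cyclic, the $p$-part of $\exp(G)$ equals $|P|$, and the witness hypothesis on $H$ supplies an element of order $|P|$ in $H$, generating a full Sylow $p$-subgroup of $G$. By Sylow's theorem, after replacing $H$ by a conjugate we may assume $P\leq H$. Because $Q\mathrel{\unlhd} G$, this gives a semi-direct decomposition $H=Q_1\rtimes P$ with $Q_1:=H\cap Q$ normalised by $P$.

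Next I would exploit the irreducibility of $P$ on $Q/Q'$. The subgroup $Q_1Q'/Q'$ is $P$-invariant, so by irreducibility it is either trivial or all of $Q/Q'$. If $Q_1\leq Q'$ then $Q_1$ is abelian (in either branch of the ``special'' hypothesis $Q'$ is abelian) and is centralised by $P$ (by the assumed trivial action on $Q'$); since $P$ is cyclic this makes $H=Q_1\times P$ abelian, contradicting that $H$ is non-abelian. Otherwise $Q_1Q'=Q$; since $\Phi(Q)=Q'$ (trivially when $Q$ is elementary abelian, and by the special hypothesis when $Q$ has nilpotency class~$2$), this reads $Q_1\Phi(Q)=Q$, and the non-generator property of the Frattini subgroup forces $Q_1=Q$. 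Then $H=Q\rtimes P=G$, contradicting that $H$ is proper.

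The only ingredient beyond the earlier proofs is the identification $\Phi(Q)=Q'$ built into the notion of ``special'', which is exactly what converts the equality $Q_1Q'=Q$ into $Q_1=Q$; once the decomposition $H=Q_1\rtimes P$ and the Frattini step are in place, everything follows by the same reasoning used in the preceding lemmas, and I do not anticipate any further obstacle.
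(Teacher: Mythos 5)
Your proposal is correct and follows essentially the same route as the paper's proof: reduce to $H=Q_1\rtimes P$ with $P$ a full cyclic Sylow $p$-subgroup, rule out $Q_1\leq Q'=\Phi(Q)$ because that would make $H$ abelian, and otherwise use irreducibility of $P$ on $Q/\Phi(Q)$ together with the non-generator property of $\Phi(Q)$ to force $Q_1=Q$ and $H=G$. The only cosmetic difference is that you justify the reduction via Sylow conjugacy and $Q_1=H\cap Q$, where the paper simply asserts the decomposition without loss of generality.
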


\begin{proof} Let $G = Q \rtimes P$ satisfy the conditions of the lemma. We show that there is no $p$-witness for $\exp (G)$.

Assume for a contradiction that $H$ is a $p$-witness for $\exp (G)$. Without loss of generality, we may assume that $H = Q_1P_1$ for some $Q_1 \leq Q$ and $P_1 \leq P$ where $Q_1$ is $P_1$-invariant. Clearly $P_1$ contains an element $x$ which is of maximal order in $P$, and so (since $P$ is cyclic) we see that $P_1=P=\langle x\rangle$.

Suppose that $Q_1\subseteq \Phi(Q)$. Since $Q$ is special (whether elementary abelian or not) $Q'=\Phi(Q)\leq Z(Q)$. So $Q_1\leq Z(Q)$ is abelian and, since $P$ centralises $Q'$, we see that $P$ centralises $Q_1$. But then $H$ is abelian, and we have a contradiction. We may deduce that $Q_1$ contains an element in $Q\setminus \Phi(Q)$, and so $Q_1\Phi(Q)/\Phi(Q)$ is non-trivial.

Since $Q$ is special, $\Phi(Q)= Q'$ and so $P$ acts irreducibly on $Q/\Phi(Q)$. Since $Q_1$ is $P$-invariant, the non-trivial subgroup $Q_1\Phi(Q)/\Phi(Q)$ is a $P$-invariant subgroup of $Q/\Phi(Q)$. So $Q_1\Phi(Q)/\Phi(Q)=Q/\Phi(Q)$ and thus $Q_1\Phi(Q)=Q$. By the non-generation property of the Frattini subgroup, we may deduce that $Q_1=Q$ and so $H=G$. This contradicts the fact that $H$ is proper, as required.
\end{proof}

We are now in a position to prove Theorem \ref{prevthm:2primes}. 

\begin{proof}[\bf{Proof of Theorem \ref{prevthm:2primes}}.]\label{thm:2primes} Lemmas~\ref{lem:2primesconv_a} to~\ref{lem:2primesconv_c} show that the four families described in the theorem consist entirely of exponent-critical groups. It suffices to show that any non-abelian exponent-critical $p,q$-group lies in one of these four families.

Let $G$ be a non-abelian exponent-critical group with $|G| = p^{\alpha}q^{\beta}$ where $p$ and $q$ are distinct primes. Suppose that there is no $p$-witness for $G$. So all proper subgroups containing a Sylow $p$-subgroup $P$ of $G$ are abelian, and in particular $P$ is abelian. Moreover, either $P$ is normal or $\mathbf{N}_G(P)$ is abelian. 

\vspace{.05in}
\noindent
{\bf Part I: $P$ is normal.}

\noindent Suppose that a complement $Q$ of $P$ is also normal, and so $G = P \times Q$. Since no non-abelian proper subgroup of $G$ can contain $P$ we see that $Q$ is minimal non-abelian. Now suppose that $P$ is not cyclic. Let $x$ be an element of maximal order in $P$. Then $\langle x\rangle Q$ is a $p$-witness for $G$, which is a contradiction. So $P$ must be cyclic and $G$ lies in the family described in Lemma~\ref{lem:2primesconv_a}. 

\vspace{.05in}
\noindent
Now let us consider the case when $P$ is normal but its complement, say $Q$, is not. So we can find an element $y \in Q$ which is not in $\mathbf{C}_G(P)$. Thus $P\langle y\rangle$ is a non-abelian subgroup of $G$ containing $P$. So $G = P\langle y\rangle$ and therefore $Q = \langle y\rangle$. Furthermore $\langle y^q \rangle \leq \mathbf{C}_G(P)$ since otherwise $P\langle y^q \rangle $ is a $p$-witness for $G$. So $|G: \mathbf{C}_G(P)| =q$.

Let the exponent of $P$ be $p^m$. Write $P[p^{m-1}]$ for the subgroup of elements of $P$ of order dividing $p^{m-1}$, and note that $P^p\leq P[p^{m-1}]$. Set $V=P/P^p$. We regard $V$ as a vector space over $\mathbb{F}_p$. Indeed, $V$ can be thought of as an $\mathbb{F}_pQ$-module, with the action of $Q$ derived from conjugation. Now $Q$ acts non-trivially on $P$ by conjugation, and the only automorphisms of $P$ that induce the identity on $P/\Phi(P)$ have $p$-power order. Since $\Phi(P)=P^p$, we see that $Q$ acts non-trivially on $V$.

Let $\pi:P\rightarrow V$ be the natural homomorphism. Now $U:=\pi(P[p^{m-1}])=P[p^{m-1}]/P^p$ is an $\mathbb{F}_pQ$ submodule of $V$. Indeed $U$ is a proper submodule, since $P$ has exponent $p^m$. Since the order of $Q$ is coprime to $p$, the module $V$ is completely decomposable and so we may write $V$ as the sum
\begin{equation}
\label{eqn:decomposition}
V=U_1\oplus U_2\oplus\cdots U_k\oplus W_1\oplus W_2\oplus\cdots W_\ell
\end{equation}
of irreducible submodules, where $U=U_1\oplus U_2\oplus\cdots \oplus U_k$ and where $W:=W_1\oplus W_2\oplus\cdots\oplus W_\ell$ forms a complement to $U$ in $V$. We have $k\geq 0$ and, since $U$ is proper, $\ell\geq 1$. Since all the elements in $P\setminus P[p^{m-1}]$ have order $p^m$, we see that $\pi^{-1}(W_i)$ has a subgroup of $P$ of exponent $p^m$. We divide our argument into two sub-cases:

\paragraph{Sub-case 1: Suppose that $Q$ acts non-trivially on a submodule $W_i$.} We see that $\langle\pi^{-1}(W_i), Q\rangle$ is a non-abelian subgroup of $G$ of exponent $p^m$, and (since $G$ has no $p$-witness) we deduce that $G=\langle\pi^{-1}(W_i), Q\rangle$. Hence $k=0$, $\ell=1$ and $V=W_i$ in this case. Since $k=0$ we see that $P$ is a direct product of cyclic groups of order $p^m$. Since $W_i$ is irreducible and $W_i=P/P^p$, we see that $Q$ acts irreducibly on $P/P^p$. Thus $G$ lies in the family described in Lemma~\ref{lem:2primesconv_ba}. 

\paragraph{Sub-case 2: $Q$ acts trivially on all submodules $W_i$.} Since $Q$ acts non-trivially on $V$, we see that $Q$ acts non-trivially on one of modules $U_i$. (In particular, this implies that $Q$ acts non-trivially on $U$ and so $m>1$.) The subgroup $\langle\pi^{-1}(U_i\oplus W_1), Q\rangle$ is non-abelian and of exponent $p^m$, and so $\langle\pi^{-1}(U_i\oplus W_1), Q\rangle=G$. Hence $k=1=\ell$, and $V=U_1\oplus W_1$. If $Q$ acts non-trivially by conjugation on $P^p$, the subgroup $\langle \pi^{-1}(W_1),Q\rangle$ would be a $p$-witness, and so we deduce that $Q$ centralises $P^p$.

Taking $p$-powers in $P$ induces a surjective $\mathbb{F}_pQ$-module homomorphism $f$ from $V=P/P^p$ to the $\mathbb{F}_qQ$-module $P^p/P^{p^2}$. Since $m>1$, this homomorphism is non-trivial. Since $Q$ centralises $P^p/P^{p^2}$ and acts non-trivially and irreducibly on $U_1$, we see that $f(U_1)=0$ and $f(W_1)=P^p/P^{p^2}$. Since $W_1$ is trivial and irreducible, it has dimension $1$ and so $P$ is a direct product of a cyclic group of order $p^m$ with an elementary abelian group.

Let $x\in \pi^{-1}(W_1\setminus\{0\})$. The element $x\in P$ has order $p^m$ and (since $|Q|$ is coprime to $p$) $x$ is centralised by $Q$. 
The action of $Q$ by conjugation on $P[p]$ gives $P[p]$ the structure of an $\mathbb{F}_pQ$-module. Let $A$ be a complement to the submodule in $P[p]$ generated by $x^{p^{m-1}}$. Then $P=\langle x\rangle\times A$, where the action of $Q$ fixes $x$ and preserves the direct product. Since $A P^p/P^p=U_1$, we see that the action of $Q$ on $A$ is irreducible. So $G$ lies in the family described in Lemma~\ref{lem:2primesconv_bb}. 

\vspace{.05in}
\noindent
{\bf Part II: $P$ is not normal and so $\mathbf{N}_G(P)$ is abelian}

\noindent Clearly in this case, $P\leq Z(\mathbf{N}_G(P))$ and by Burnside's Normal $p$-complement Theorem, $G$ has a normal $p$-complement $Q$. Therefore $G$ is a semi-direct product of its Sylow $q$-subgroup $Q$ by a non-normal abelian Sylow $p$-subgroup $P$.

Suppose, for a contradiction, that $P$ is not cyclic. Let $x \in P$ have maximal order. Then $\langle x\rangle Q$ is a proper subgroup, so must be abelian (otherwise we have a witness for the $p$-part of $\exp G$). So all elements of maximal order lie in $\mathbf{C}_G(Q)$. But $P$ is generated by its elements of maximal order, and so $P\leq \mathbf{C}_G(Q)$. This implies that $P$ is normal, and we have our contradiction. Hence $P$ is cyclic.

The subgroup $P$ must centralise any proper $P$-invariant subgroup of $Q$, otherwise we have a witness to the $p$-part of $\exp (G)$. So a generator $x$ of $P$ gives rise (via conjugation) to an automorphism of $Q$ that acts trivially on any proper subgroup of $Q$. This automorphism is non-trivial, since $P$ is not normal. Hence, by~\cite[Theorem 5.3.7]{Gorenstein}, $Q$ is special, $P$ acts trivially on $Q'$, and $P$ acts irreducibly on $Q/Q'$. So $G$ lies in the family described in Lemma~\ref{lem:2primesconv_c}, and the theorem follows.
\end{proof}

\section{Exponent-critical $p$-groups}\label{sec p}

We now examine exponent-critical $p$-groups. We will prove Theorems \ref{prevthm:primetypeB} and \ref{prevthm:primetypeA} in this section.

Let $P$ be a non-abelian $p$-group of order $p^n$. We denote by ${\cal M}_P$ the set of elements of $P$ of maximal order, or equivalently the set of elements that have order equal to $\exp (P)$. Note that $P$ is exponent-critical if and only if ${\cal M}_P  \cap M = \emptyset$ for all non-abelian maximal subgroups $M$ of $P$.

\begin{Thm}
\label{thm:basic_pgroup}
Let $P$ be a finite non-abelian exponent critical $p$-group. Then $P=\langle a,b\rangle$ where $a\in P$ has maximal order and $b\in P$. Moreover, $P$ is solvable of derived length $2$.
\end{Thm}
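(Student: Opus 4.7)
The plan is to first secure an element $a$ of maximal order $p^m = \exp(P)$ lying outside the centre, then exhibit a single partner $b$ with which it generates $P$, and finally read off derived length $2$ from an abelian maximal subgroup containing $a$.

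First I would show ${\cal M}_P \not\subseteq Z(P)$. Suppose for a contradiction that every element of maximal order were central. Since $P$ is non-abelian, choose $b \in P \setminus Z(P)$; by the assumption $o(b) \le p^{m-1}$, so $b^{p^{m-1}} = 1$. Pick any central $a$ of order $p^m$. The product $ab$ is non-central (else $b = a^{-1}(ab) \in Z(P)$), but the two factors commute, so $(ab)^{p^{m-1}} = a^{p^{m-1}} \ne 1$, forcing $o(ab) = p^m$ and contradicting ${\cal M}_P \subseteq Z(P)$.

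Next, with $a \in {\cal M}_P$ chosen outside $Z(P)$, pick $b \in P$ with $[a,b] \ne 1$. The subgroup $\langle a, b\rangle$ is non-abelian and contains the max-order element $a$, so it realises $\exp(P)$. If $\langle a,b\rangle$ were proper, any maximal subgroup above it would be non-abelian and would meet ${\cal M}_P$, contradicting the characterisation of exponent-criticality recorded just before the theorem statement. Hence $P = \langle a, b \rangle$, proving the generation claim.

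For the derived length, let $M$ be any maximal subgroup of $P$ containing $a$. Being proper and meeting ${\cal M}_P$, exponent-criticality forces $M$ to be abelian. As $M$ has index $p$ in $P$, it is normal with cyclic quotient, so $P' \le M$ and $P'$ is abelian; combined with $P$ non-abelian this pins the derived length at exactly $2$. The only step needing any care is the first one, producing a non-central element of maximal order; once that degenerate case is ruled out, both conclusions are immediate applications of the exponent-critical hypothesis to $\langle a, b\rangle$ and to a maximal subgroup through $a$.
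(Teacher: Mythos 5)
Your proposal is correct and follows essentially the same route as the paper's proof: both rule out ${\cal M}_P\subseteq Z(P)$ by exhibiting a non-central product of maximal order, then take $b$ not commuting with a non-central $a\in{\cal M}_P$ and invoke exponent-criticality to force $\langle a,b\rangle=P$. The only cosmetic difference is that the paper deduces $P'$ abelian via $P'\leq\Phi(P)\leq M$ with $M$ an abelian maximal subgroup, whereas you use $P'\leq M$ directly from $|P:M|=p$; both are immediate.
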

\begin{proof}
The Frattini subgroup $\Phi(P)$ is contained in all maximal subgroups. Since at least one maximal subgroup is abelian, $\Phi(P)$ is abelian. Since $P'\leq P'P^p=\Phi(P)$, and since $P$ is non-abelian, $P$ is solvable of derived length $2$.

We now show that $P$ is $2$-generated. Suppose, for a contradiction, that all elements in ${\cal M}_P$ are central. Let $a\in{\cal M}_P$, and let $x,y\in P$ be such that $[x,y]\not=1$. As $x$ is not central, it does not have maximal order. Hence, using the fact that $[a,x]=1$, we see that $xa$ has maximal order. Since $a$ is central, $[xa,y]=[x,y]^a[a,y]=[x,y]\not=1$. Thus we have a contradiction as required, and we may deduce that there exists an element $a\in{\cal M}_P$ of maximal order that is not central. Let $b \in P$ be such that $[a , b] \not = 1$. Thus $\langle a, b \rangle$ is a non-abelian subgroup of exponent $\exp (P)$ and so $P = \langle a, b \rangle$. Hence the theorem follows.
\end{proof}

\begin{proof}[\bf{Proof of Theorem \ref{prevthm:primetypeB}}.]\label{thm:primetypeB}

Suppose $P$ is a non-abelian finite group which is $2$-generated and $|P'| =p$.  We show that $P$ is exponent critical of type \cB. Let $P=\langle a,b\rangle$. Since $P$ is nilpotent, $[P', P]$ is a proper subgroup of $P'$. Thus $P$ has nilpotency class 2. In particular, $P'\leq Z(P)$. Since $P$ has nilpotency class 2 we get that $1=[a,b]^p=[a^p,b]= [a,b^p]$. So $a^p$ and $b^p$ are central. Hence $\Phi(P)\leq Z(P)$ as in a $p$-group $\Phi(P)$ is the subgroup generated by $P'$ and the $p$-th powers of a generating set. Let $M$ be a maximal subgroup. Then $\Phi(P)\leq M$. Since $P$ is $2$-generated, $\Phi(P)$ has index $p^2$ in $P$, and so has index $p$ in $M$. Since $\Phi(P)$ is central in $P$, we see that $Z(M)\geq \Phi(P)$ and so the centre of $M$ has index at most $p$ in $M$. But then $M$ must be abelian (and $Z(M)=M$). Thus $P$ is a minimal non-abelian group and hence exponent-critical of type \cB, as required.

Conversely, suppose that $P$ is exponent-critical of type~\cB. By Theorem~\ref{thm:basic_pgroup}, $P=\langle a,b\rangle$ for elements $a,b\in P$ with $a\in{\cal M}_P$. Since $P$ has type \cB, it contains distinct abelian maximal subgroups  $M_1$ and $M_2$. For $x,y\in P$ we see that $[x,y]\in P'\leq \Phi(P)\leq M_1\cap M_2$ and so $[x,y]$ commutes with all elements of $M_1$ and $M_2$. Since $P=M_1M_2$, we deduce that $P'$ is central in $P$ and so $P$ has nilpotency class 2. Hence $[x, yz] = [x, y][x, z]$ and $[xy, z] = [x, z][y, z]$ for all $x, y, z$ in $P$. Using this we see that $P'= \langle [a, b] \rangle$. Let $M_a$ be a maximal subgroup of $P$ such that $a \in M_a$. Since $a$ has maximal order and $P$ is exponent-critical we see that $M_a$ is abelian. Clearly $b^p \in M_a$. Since $P$ has nilpotency class $2$, we see that $1 =[a, b^p] = [a, b]^p = [a^p, b]$. So $o([a, b]) = p = |P'|$, and part~(iii) of the theorem follows.
\end{proof}

We comment that the isomorphism classes of $p$-groups of type \cB\ are known precisely, as part of the (more general) classifications of Blackburn~\cite{B99} or of Ahmad, Magidin and Morse~\cite{AMM2012} (see also~\cite{BK1993,KVS1999}).  Using the approach in~\cite{AMM2012}, we have the following list:

\begin{Thm}
\label{expcriticalpgrp_nc2}Let $P$ be an exponent-critical finite non-abelian $p$-group of type ${\cal B}$ of order $p^n$. Then $$ P \cong \left\langle a, b \mid {[a , b]}^{p}  =  [a , b , a] = [ a , b , b] = 1; a^{p^\alpha} = {[a , b]}^{p^\rho}\!\!, \,\,b^{p^\beta} = {[a , b]}^{p^\sigma} \right\rangle$$ where $\alpha, \beta, \rho, \sigma$ are integers such that: $\alpha \geq \beta \geq 1$, $\alpha + \beta =n-1$, and $0 \leq \rho, \sigma \leq 1$.  When $p$ is odd, $P$ is isomorphic to exactly one of the groups whose parameters $(\alpha,\beta,\rho,\delta)$ are listed below: 
\begin{enumerate}[\rm{(}1\rm{)}]
\item 
\begin{enumerate}[\rm{(}a\rm{)}]
\item $(\alpha, \beta,  0, 1)$ with $\alpha>\beta\geq 1$.
\item $(\alpha, \beta,  1, 1)$ with $\alpha>\beta\geq 1$.
\item $(\alpha, \beta,  1, 0)$ with $\alpha>\beta\geq 1$.
\end{enumerate}
\item 
\begin{enumerate}[\rm{(}a\rm{)}]
\item $(\alpha, \alpha,  0, 1)$ with  $\alpha\geq 1$.
\item $(\alpha, \alpha,  1, 1)$ with $\alpha\geq 1$.
\end{enumerate}
\end{enumerate}
When $p=2$, $P$ is isomorphic to exactly one of the groups whose parameters $(\alpha,\beta,\rho,\delta)$ are 
\begin{enumerate}[\rm{(}1\rm{)}]
\item 
\begin{enumerate}[\rm{(}a\rm{)}]
\item $(\alpha, \beta,  0, 1)$ with $\alpha>\beta\geq 1$.
\item $(\alpha, \beta,  1, 1)$ with $\alpha>\beta\geq 1$.
\item $(\alpha, \beta,  1, 0)$ with $\alpha>\beta\geq 1$.
\end{enumerate}
\item 
\begin{enumerate}[\rm{(}a\rm{)}]
\item $(\alpha, \alpha,  0, 1)$ with  $\alpha> 1$.
\item $(\alpha, \alpha,  1, 1)$ with $\alpha> 1$.
\end{enumerate}
\item
\begin{enumerate}[\rm{(}a\rm{)}]
\item $(1, 1,  0, 0)$.
\item $(1, 1,  1, 1)$.
\end{enumerate}
\end{enumerate}
\end{Thm}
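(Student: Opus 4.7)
The plan is to build on Theorem~\ref{prevthm:primetypeB}, which already tells us that every type $\cal B$ group $P$ is $2$-generated with $|P'|=p$, and therefore of nilpotency class $2$. I would first fix generators $a,b$ of $P$ and set $c=[a,b]$. Because $P'=\langle c\rangle$ has order $p$ and is central, the relations $c^p=[c,a]=[c,b]=1$ are immediate and account for three of the relations in the stated presentation. The quotient $P/P'$ is abelian and $2$-generated, so it has type $(p^\alpha,p^\beta)$ for some $\alpha,\beta\ge 1$ (the lower bound coming from $P$ being non-cyclic); after swapping $a$ and $b$ if necessary I may assume $\alpha\ge\beta$, which gives $|P|=p^{\alpha+\beta+1}$ and therefore $n=\alpha+\beta+1$. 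Since $a^{p^\alpha}$ and $b^{p^\beta}$ lie in $P'=\langle c\rangle$ and $c$ has order $p$, each is either $1$ or $c$; encoding this choice as $a^{p^\alpha}=c^{p^\rho}$ and $b^{p^\beta}=c^{p^\sigma}$ with $\rho,\sigma\in\{0,1\}$ supplies the final two relations. A routine collection argument shows that the abstract presentation defines a class-$2$ group of order exactly $p^{\alpha+\beta+1}$, so every type $\cal B$ group occurs in this four-parameter family.

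The substantive work is to reduce the list to a non-redundant set of isomorphism types. The pair $(\alpha,\beta)$ with $\alpha\ge\beta$ is an invariant, since it is read off from $P/P'$. I would fix $(\alpha,\beta)$ and use changes of generators of the form $(a,b)\mapsto(a',b')$ inducing an invertible linear map on $P/\Phi(P)\cong\mathbb{F}_p^{\,2}$, together with the class-$2$ power formula $(xy)^{p^k}=x^{p^k}y^{p^k}[y,x]^{\binom{p^k}{2}}$, to absorb the parameters $\rho,\sigma$. When $\alpha>\beta$ (which forces $\alpha\ge 2$), the substitution $b\mapsto b\,a^{-p^{\alpha-\beta}}$, together with divisibility of $p^{\alpha-\beta}\binom{p^\beta}{2}$ by $p$ in both parities of $p$, collapses $(\rho,\sigma)=(0,0)$ into $(0,1)$; the three remaining tuples $(0,1),(1,0),(1,1)$ are pairwise non-isomorphic, as they are distinguished by the orders of $a$ and $b$ in $P$ and by $|\Omega_1(P)|$, giving families~(1a)--(1c). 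When $\alpha=\beta$, the swap $a\leftrightarrow b$ identifies $(\rho,\sigma)$ with $(\sigma,\rho)$; and for odd $p$ the binomial $\binom{p^\alpha}{2}$ is divisible by $p$, so the substitution $b\mapsto ba^{-1}$ collapses $(0,0)$ into $(0,1)$, leaving only (2a) and (2b).

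The main obstacle is the case $p=2$. For $\alpha=\beta\ge 2$ the binomial $\binom{2^\alpha}{2}=2^{\alpha-1}(2^\alpha-1)$ is still even, so the odd-$p$ collapse persists and yields the restricted families (2a),(2b) with $\alpha>1$. However, when $\alpha=\beta=1$ the correction term $\binom{2}{2}=1$ is non-trivial, the collapse fails, and I would verify directly that $(1,1,0,0)$ gives the quaternion group $Q_8$ (every element outside $P'$ has order $4$) whereas $(1,1,1,1)$ gives the dihedral group $D_8$ (which contains an involution outside $P'$); these two are separated by $|\Omega_1(P)|$ and form family~(3). Finally, I would confirm non-redundancy of the full list by comparing, across all displayed tuples, the triple of invariants $(P/P',\,\exp(P),\,|\Omega_1(P)|)$, which is enough to separate the different items.
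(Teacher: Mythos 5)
Your argument is correct in outline, but you should know that the paper does not actually prove this theorem: it is quoted from the literature, with the classification attributed to Blackburn~\cite{B99} and to Ahmad, Magidin and Morse~\cite{AMM2012}, the only input from the paper itself being Theorem~\ref{prevthm:primetypeB}, which identifies type ${\cal B}$ with ``$2$-generated and $|P'|=p$''. So your self-contained derivation is a genuinely different route from the one the authors take. Your reduction to the four-parameter presentation, the order count $n=\alpha+\beta+1$, and the collapsing substitutions $b\mapsto ba^{-p^{\alpha-\beta}}$ (for $\alpha>\beta$) and $b\mapsto ba^{-1}$ (for $\alpha=\beta$, using $p\mid\binom{p^\alpha}{2}$ except when $p=2$ and $\alpha=1$) are exactly the computations the cited classification rests on, and your handling of the exceptional case $p=2$, $\alpha=\beta=1$ ($Q_8$ versus $D_8$) is right. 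Two steps deserve more care than you give them. First, the claim that $a^{p^\alpha}$ ``is either $1$ or $c$'' is false as stated: it is $c^k$ for some $k$, and you need the observation that replacing $a$ by $a^s$ and $b$ by $b^u$ (with $s,u$ units modulo $p$) rescales the exponents $k$ and $l$ in $a^{p^\alpha}=c^k$, $b^{p^\beta}=c^l$ independently by units, which is what lets you normalise each nonzero exponent to $1$ and hence force $\rho,\sigma\in\{0,1\}$. Second, the ``exactly one'' half of the statement requires actually computing the invariants you name --- for instance the order of the subgroup of $p^\beta$-th powers, which separates $(\alpha,\beta,1,1)$ from $(\alpha,\beta,1,0)$ when $\alpha>\beta$ --- rather than merely asserting that they suffice; these computations are routine for odd $p$, where $x\mapsto x^p$ is a homomorphism on a class-$2$ group whose derived subgroup has exponent $p$, but must be done by hand for $p=2$. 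What your approach buys is a proof readable without consulting~\cite{AMM2012}; what the paper's citation buys is brevity and the assurance of a complete published classification covering exactly these groups.
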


We now turn to exponent-critical groups of type~\cA, with an aim of proving Theorem \ref{prevthm:primetypeA}. In the lemma below, we write $[x,_i b]$ for the $i$-times iterated commutator of $x$ and $b$. So, for example, $[x,_1 b]=[x,b]$, $[x,_3 b]=[[[x,b],b],b]$ and $[x,_0 b]=x$. We write $Z_{p^m}$ for the cyclic group of order $p^m$.

\begin{Lem}
\label{lem:Type_B_basic_facts}
Let $P$ be a non-abelian exponent-critical $p$-group of type ${\cal A}$. Let $A$ be a maximal subgroup containing an element of maximal order $p^m$.
\begin{enumerate}[\rm{(}i\rm{)}]
\item $A$ is abelian, normal, and contains all elements of order $p^m$.
\item $A\cong Z_{p^m}\times S$, where $S$ has exponent dividing $p^{m-1}$.
\item $P'$ is abelian, of exponent dividing $p^{m-1}$.
\end{enumerate}
\end{Lem}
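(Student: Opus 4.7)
For part~(i), the three conclusions are each quick. If $A$ were non-abelian, then $A$ would be a proper non-abelian subgroup with $\exp(A)=p^m=\exp(P)$ (since $A$ contains an element of maximal order), so $A$ would be a $p$-witness, contradicting exponent-criticality. Since $P$ has type~${\cal A}$, $A$ is the unique abelian maximal subgroup of $P$, and every conjugate of $A$ is again an abelian maximal subgroup; by uniqueness each such conjugate equals $A$, so $A$ is normal. Finally, if $g\in P$ has order $p^m$, then any maximal subgroup $N$ containing $g$ satisfies $\exp(N)=p^m$, so the same witness argument forces $N$ to be abelian, whence $N=A$ and $g\in A$.

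For part~(ii), the key is to extract a single non-abelian maximal subgroup. Since $P$ is non-abelian it is non-cyclic, so it has at least two maximal subgroups; type~${\cal A}$ then supplies a non-abelian maximal subgroup $M$. As $M$ is a proper non-abelian subgroup of $P$, exponent-criticality forces $\exp(M)\leq p^{m-1}$. Every element of $M\cap A$ therefore has order at most $p^{m-1}$, that is $M\cap A\leq A[p^{m-1}]$, the subgroup of elements of $A$ of order dividing $p^{m-1}$. Since $M\not\leq A$, we have $MA=P$ and hence $|M\cap A|=|A|/p$, giving $|A:A[p^{m-1}]|\leq p$; the reverse inequality holds because $A$ contains an element of order $p^m$, so $|A:A[p^{m-1}]|=p$. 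Writing $A$ as a direct product of cyclic $p$-groups via the structure theorem, one computes $|A:A[p^{m-1}]|=p^k$, where $k$ is the number of cyclic factors of order exactly $p^m$. Hence $k=1$, which yields $A\cong Z_{p^m}\times S$ with $\exp(S)$ dividing $p^{m-1}$.

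Part~(iii) follows immediately from the existence of the non-abelian maximal $M$ produced in~(ii). Since in a $p$-group $P'\leq P'P^p=\Phi(P)$, and since $\Phi(P)$ is contained in every maximal subgroup, we get $P'\leq\Phi(P)\leq A$, so $P'$ is abelian as a subgroup of the abelian group $A$ (equivalently, this is the derived-length-$2$ conclusion of Theorem~\ref{thm:basic_pgroup}). Likewise $P'\leq\Phi(P)\leq M$, and $\exp(M)\leq p^{m-1}$ forces $\exp(P')$ to divide $p^{m-1}$.

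The only step that requires any thought is the index computation in~(ii): one has to notice that the mere existence of any non-abelian maximal subgroup is enough to pin $|A:A[p^{m-1}]|$ to exactly $p$, after which the elementary-divisor description of $A$ forces the desired cyclic structure. No commutator calculation or analysis of the action of $P/A$ on $A$ is needed.
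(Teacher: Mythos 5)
Your proof is correct, but part~(ii) takes a genuinely different route from the paper's. The paper fixes $b\in P\setminus A$, lets $K$ be the subgroup of elements of $A$ of order dividing $p^{m-1}$, and uses the nilpotent action of $b$ on the elementary abelian quotient $A/K$ (of order $p^r$, where $r$ is the number of cyclic factors of $A$ of order $p^m$) to produce a $b$-invariant subgroup $L\geq K$ of index $p$ in $A$; the maximal subgroup $\langle L,b\rangle$ then yields a contradiction when $r>1$, since $L$ would contain an element of order $p^m$ while $\langle L,b\rangle\neq A$. You instead take an arbitrary non-abelian maximal subgroup $M$ (which exists because a non-abelian $p$-group has at least $p+1$ maximal subgroups and type ${\cal A}$ says only one is abelian), observe that exponent-criticality of a $p$-group forces $\exp(M)$ to divide $p^{m-1}$, and pin down $|A:A[p^{m-1}]|=p$ from $M\cap A\leq A[p^{m-1}]$ and $|A:M\cap A|=p$; the elementary divisor decomposition of $A$ then does the rest. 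This is cleaner and avoids the action argument entirely. The paper's construction is not wasted, though: the subgroup $K$ of index $p^2$ in $P$ that falls out of its argument gives part~(iii) immediately ($P'\leq K$ since $P/K$ is abelian), whereas you reuse $\exp(M)\mid p^{m-1}$ together with $P'\leq\Phi(P)\leq M$ --- both are one-liners. Your normality argument in~(i), via conjugates of the unique abelian maximal subgroup, is more roundabout than necessary (maximal subgroups of a $p$-group are automatically normal, as the paper notes), but it is valid.
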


\begin{proof}
$A$ is normal, since all maximal subgroups of a $p$-group are normal. If $A$ were non-abelian, $P$ would have a proper non-abelian subgroup of exponent $p^m$ and hence $P$ would not exponent-critical. This contradiction shows that $A$ is abelian. Since $P$ is of type ${\cal A}$, we have that $A$ is the unique maximal subgroup of $P$ which is abelian. Clearly, $A$ will contain all the elements of maximal order and part~(i) of the lemma follows.

To prove part~(ii), first note that $A$ has exponent $p^m$, and so $A\cong (Z_{p^m})^r\times S$ where $r$ is a positive integer and $S$ has exponent dividing $p^{m-1}$. Define $K$ to be the subgroup of all elements of $A$ of order dividing $p^{m-1}$. Every element of $A\setminus K$ has order $p^m$, and $A/K$ is elementary abelian of order $p^r$. Let $b\in P\setminus A$. Since $b$ acts nilpotently on $A/K$, there exists a subgroup $L$ containing $K$ such that $b\in \mathbf{N}_P(L)$ and $L$ has index $p$ in $A$. Note that $L$ is normal in $P$, since $\{b\}\cup A\subseteq \mathbf{N}_P(L)$. Consider the subgroup $M$ generated by $L$ and $b$. Since $A$ has index $p$ in $P$, we get that $b^p\in A$. Since $b\in P\setminus A$, the order of $b$ divides $p^{m-1}$ and so $b^p\in K\leq L$. Hence $M$ has index $p$ in $P$ and so is maximal. If $r>1$ then $L$ contains elements of $A\setminus K$, and so $L$ contains an element of maximal order. Hence $M$ must be abelian (as $P$ is exponent critical) and therefore $M=A$. But $b\in M$ and $b\notin A$. This contradiction shows that $r=1$, and so part~(ii) is established.

To prove~(iii), we note that $K$ has index $p$ in $A$ by part~(ii) and hence $K$ has index $p^2$ in $P$. So $P/K$ is abelian and therefore $P'\leq K$. Hence the exponent of $P'$ divides $p^{m-1}$, as required.
\end{proof}

\begin{Lem}
\label{lem:abelian_index_p_facts}
Let $P$ be a $p$-group. Suppose $P$ possesses an abelian maximal subgroup $A$. Let $b\in P\setminus A$.
\begin{enumerate}[\rm{(}i\rm{)}]   
\item The map $\kappa:A\rightarrow A$ defined by $\kappa(x)=[x,b]$ is a homomorphism.
\item For $x\in A$, and $b\in P\setminus A$,
\begin{equation}
\label{eqn:power}
(bx)^i=b^ix^i\prod_{j=1}^{i-1} [x,_j b]^{\binom{i}{j+1}}.
\end{equation}
\item For $x\in A$, and $b\in P\setminus A$,
\begin{equation}
\label{eqn:b_order}
[x,_p b]=\prod_{i=1}^{p-1}[x,_i b]^{-\binom{p}{i}}.
\end{equation}
\end{enumerate}
\end{Lem}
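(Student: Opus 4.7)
The three parts share a common mechanism: since $A$ is a maximal subgroup of the $p$-group $P$ it is normal of index $p$, so conjugation by $b$ induces an automorphism of the abelian group $A$ whose action can be analyzed using the endomorphism ring of $A$ (written additively). I will exploit this throughout.

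For part~(i), I would first note that normality of $A$ gives $b^{-1}xb\in A$, so $[x,b]=x^{-1}(b^{-1}xb)\in A$ for every $x\in A$. The standard commutator identity $[xy,b]=[x,b]^y[y,b]$ then simplifies to $\kappa(xy)=\kappa(x)\kappa(y)$, because $[x,b]$ and $y$ both lie in the abelian group $A$, forcing $[x,b]^y=[x,b]$.

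For part~(ii), writing $A$ additively, let $T:A\to A$ denote the automorphism $T(x)=b^{-1}xb$, and set $N=T-I$, which by~(i) is an endomorphism of $A$ satisfying $N(x)=[x,b]$. Iterating $N$ gives $N^j(x)=[x,_j b]$. A short induction on $i$, using the abelianness of $A$ to reorder factors, yields
\[
(bx)^i = b^i\,T^{i-1}(x)\,T^{i-2}(x)\cdots T(x)\,x = b^i\sum_{k=0}^{i-1} T^k(x),
\]
where the sum is additive in $A$. Expanding each $T^k=(I+N)^k$ in the commutative endomorphism ring of $A$ via the binomial theorem and swapping the order of summation gives a total coefficient of $\sum_{k=j}^{i-1}\binom{k}{j}$ for $N^j(x)$, which by the hockey-stick identity equals $\binom{i}{j+1}$. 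Translating back to multiplicative notation produces the stated formula.

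For part~(iii), the key input is that $b^p\in A$ because $A$ has index $p$ in $P$, and hence conjugation by $b^p$ is trivial on the abelian group $A$; that is, $T^p=I$. Expanding $(I+N)^p=I$ in the commutative endomorphism ring gives $\sum_{j=1}^{p}\binom{p}{j}N^j=0$, so
\[
N^p = -\sum_{j=1}^{p-1}\binom{p}{j}N^j.
\]
Applying both sides to $x$ and rewriting multiplicatively yields $[x,_p b]=\prod_{i=1}^{p-1}[x,_i b]^{-\binom{p}{i}}$.

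The whole argument is routine once the additive viewpoint and the endomorphism $T=I+N$ are in place; the only real piece of book-keeping is the binomial/hockey-stick manipulation in part~(ii), which is the main place one has to be careful but presents no conceptual obstacle.
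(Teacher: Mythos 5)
Your proof is correct, and it takes a genuinely different route from the paper's. The paper proves (ii) and (iii) by two direct inductions on $i$ inside the group: for (ii) it multiplies $(bx)^i$ by $bx$ and collects commutators using part (i) and Pascal's rule $\binom{i}{j+1}+\binom{i}{j}=\binom{i+1}{j+1}$; for (iii) it first establishes the conjugation formula $b^{-i}xb^i=x\prod_{j=1}^{i}[x,_j b]^{\binom{i}{j}}$ by a second induction and then sets $i=p$. You instead work additively in $\mathrm{End}(A)$ with $T(x)=b^{-1}xb$ and $N=T-I$, so that $(bx)^i=b^i\sum_{k=0}^{i-1}T^k(x)$, the coefficients in (ii) drop out of the binomial expansion of $(I+N)^k$ plus the hockey-stick identity, and (iii) is immediate from $T^p=(I+N)^p=I$ --- which is exactly the paper's conjugation formula at $i=p$, obtained without induction. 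Your version is shorter and makes the structural content of (iii) transparent (the whole point is that conjugation by $b^p$ is trivial on $A$), at the cost of importing the module-theoretic viewpoint; the paper's version is more elementary and self-contained. One small imprecision worth fixing: $\mathrm{End}(A)$ need not be commutative, so you should say you are working in the commutative subring $\mathbb{Z}[T]$ generated by $T$ (equivalently by $I$ and $N$), which is all your binomial manipulations actually require.
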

\begin{proof}
To prove~(i), we note that for all $x\in A$, $[x,b]\in A$ as $A$ is normal in $P$. So $[x,b]^y=[x,b]$ for all $y\in A$, since $A$ is abelian. Hence
\[
[xy,b]=[x,b]^y[y,b]=[x,b][y,b],
\]
and $\kappa$ is a homomorphism, as required.

We prove the equality~\eqref{eqn:power} by induction on $i$. The equality~\eqref{eqn:power} clearly holds when $i=1$. Assume the equality holds for a fixed value of $i$. Since $P/A$ is abelian, all commutators lie in the abelian subgroup $A$. Hence
\begin{align*}
(bx)^ibx&=b^ix^i\left(\prod_{j=1}^{i-1} [x,_j b]^{\binom{i}{j+1}}\right) bx\text{, by our inductive hypothesis}\\
&=b^{i+1}x^i[x,b]^i\left(\prod_{j=1}^{i-1} [x,_j b]^{\binom{i}{j+1}}\right) \left(\prod_{j=1}^{i-1} [x,_{j+1} b]^{\binom{i}{j+1}}\right)x\\
&\quad\text{ by part~(i)}\\
&=b^{i+1}x^{i+1}[x,b]^{\binom{i}{1}+\binom{i}{2}}\left(\prod_{j=2}^{i-1} [x,_j b]^{\binom{i}{j+1}} \right)\left(\prod_{j=2}^{i} [x,_{j} b]^{\binom{i}{j}}\right)\\
&\quad\text{ as }x\in A,\\
&=b^{i+1}x^{i+1}\prod_{j=1}^{i} [x,_j b]^{\binom{i}{j+1}+\binom{i}{j}},\text{ since }\binom{i}{i+1}=0,\\
&=b^{i+1}x^{i+1}\prod_{j=1}^{i} [x,_j b]^{\binom{i+1}{j+1}},
\end{align*}
and so part~(ii) follows by induction.

Finally, we prove the part~(iii) of the lemma. We first prove that, for any positive integer $i$,
\begin{equation}
\label{eqn:conj}
b^{-i} xb^i=x\prod_{j=1}^i [x,_j b]^{\binom{i}{j}}.
\end{equation}
To show this, we note that $b^{-1}xb=x[x,b]$ and so~\eqref{eqn:conj} holds when $i=1$. If~\eqref{eqn:conj} holds for a given value of $i$, we see that
\begin{align*}
b^{-(i+1)}xb^{i+1}&=b^{-1}x\left(\prod_{j=1}^i [x,_j b]^{\binom{i}{j}}\right)b\\
&=x[x,b]\left(\prod_{j=1}^i [x,_j b]^{\binom{i}{j}}\right)\left(\prod_{j=1}^i [x,_{j+1} b]^{\binom{i}{j}}\right)\\
&=x\prod_{j=1}^{i+1} [x,_j b]^{\binom{i}{j}+\binom{i}{j-1}}\\
&=x\prod_{j=1}^{i+1} [x,_j b]^{\binom{i+1}{j}},
\end{align*}
and so, by induction, the equation~\eqref{eqn:conj} holds for all $i$. Now $b^p\in A$ since $A$ is maximal. As $A$ is abelian, $b^{-p}xb^p=x$, and so
\[
x=x\prod_{j=1}^p [x,_j b]^{\binom{p}{j}}.
\]
Hence part~(iii) of the lemma follows.
\end{proof}

Recall that we defined a group $U$ and normal subgroup $D$ in Definition~\ref{def:W_U}. Our aim now is to construct a `universal' group for the exponent-critical $p$-groups of exponent $p^m$ and type ${\cal A}$, as the quotient $U/{D}$ of the semidirect product $U$. Before this, in the two lemmas below, we examine some of the properties of the group $U$.

\begin{Lem}
\label{lem:automorphism}
The automorphism $\phi$ defined in Definition \ref{def:W_U} has order $p$. In particular, the definition of $U$ as a semidirect product is well-defined.
\end{Lem}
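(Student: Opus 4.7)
The plan is to lift $\phi$ to an endomorphism of a free abelian group and use the binomial identity $(1+x)^p - 1 = x^p + \sum_{j=1}^{p-1}\binom{p}{j}x^j$ to deduce $\phi^p = 1$ in a clean algebraic way.

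Let $V = \bigoplus_{i=0}^{p-1}\mathbb{Z}e_i$ be free abelian of rank $p$, and set $K = p^m\mathbb{Z}e_0 \oplus \bigoplus_{i=1}^{p-1} p^{m-1}\mathbb{Z}e_i$, so that $V/K \cong W$ via $e_i\mapsto a_i$. I would extend the formulas in Definition~\ref{def:W_U} (written additively) to a $\mathbb{Z}$-linear endomorphism $\Phi:V\to V$, and check that $\Phi(K)\subseteq K$, so that $\Phi$ descends to an endomorphism $\phi$ of $W$. The only non-trivial case is $\Phi(p^{m-1}e_{p-1}) = p^{m-1}e_{p-1} - \sum_{j=1}^{p-1}\binom{p}{j}p^{m-1}e_j$, which is manifestly a $\mathbb{Z}$-combination of $p^{m-1}e_j$'s and hence lies in $K$. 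This descent argument simultaneously shows that the formulas on generators produce a well-defined homomorphism $\phi$ of $W$.

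The central step is to prove $\Phi^p = I_V$. Let $N = \Phi - I_V$, so that $N(e_i) = e_{i+1}$ for $0\le i\le p-2$ and $N(e_{p-1}) = -\sum_{j=1}^{p-1}\binom{p}{j}e_j$. Then $e_i = N^i(e_0)$ for $0\le i\le p-1$, so $V$ is cyclic as a $\mathbb{Z}[N]$-module with generator $e_0$. Using the relation for $N(e_{p-1})$,
\[
N^p(e_0) \;=\; N(e_{p-1}) \;=\; -\sum_{j=1}^{p-1}\binom{p}{j}e_j \;=\; -\sum_{j=1}^{p-1}\binom{p}{j}N^j(e_0),
\]
so $\bigl((I_V+N)^p - I_V\bigr)(e_0) = 0$ by the binomial theorem. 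Since $(I_V+N)^p - I_V$ is a polynomial in $N$ and kills the cyclic generator $e_0$, it vanishes on all of $V$. Therefore $\Phi^p = I_V$, and quotienting by $K$ gives $\phi^p = 1_W$. Finally $\phi(a_0) = a_0 a_1 \neq a_0$ (assuming $m\ge 2$, which is implicit in the context of the applications), so $\phi$ has order exactly $p$. In particular $\phi^{-1} = \phi^{p-1}$ exists and the semidirect product $U$ is well defined.

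The main obstacle is spotting the algebraic identity behind the definition of $\phi$: the seemingly mysterious exponents $\binom{p}{j}$ appearing in $\phi(a_{p-1})$ are precisely those making $N = \Phi - I$ satisfy $(I+N)^p = I$, i.e.\ those making $N$ a root of the monic polynomial $(1+x)^p - 1$. Once this is recognised, the rest is routine: $V$ is cyclic as a $\mathbb{Z}[N]$-module, so any polynomial relation is verified by testing it on the single generator $e_0$.
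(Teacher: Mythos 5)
Your proof is correct and rests on exactly the same computation as the paper's: the paper verifies $\phi^i(a_0)=\prod_{j=0}^{i}a_j^{\binom{i}{j}}$ and concludes $\phi^p(a_0)=a_0$, then uses the fact that $W$ is generated by the orbit $\phi^j(a_0)$, which is precisely your binomial identity $(I+N)^p=I$ tested on the cyclic $\mathbb{Z}[N]$-generator $e_0$. The only (welcome) extra content in your version is the lift to the free module $V$ with $\Phi(K)\subseteq K$, which makes explicit the well-definedness of $\phi$ as a homomorphism --- a point the paper leaves implicit --- and your remark that $m\geq 2$ is needed for the order to be exactly $p$ rather than $1$.
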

\begin{proof}
We see that for $0\leq j\leq p-1$
\[
\phi^i(a_0)=\prod_{j=0}^i a_j^{\binom{i}{j}},
\]
and so a short calculation shows that $\phi^p(a_0)=\phi(\phi^{p-1}(a_0))=a_0$.
The lemma now follows since $W$ is generated by $\phi^j(a_0)$ for $0\leq j\leq p-1$.
\end{proof}

\begin{Lem}
\label{lem:basic_U_facts}
Let $U=W\rtimes \langle b_0\rangle$ be the semidirect product defined above. 
\begin{enumerate}[\rm{(}i\rm{)}]
\item We have $a_i=[a_0,_i b_0]$ for $0\leq i\leq p-1$. In particular, $U=\langle a_0,b_0\rangle$.
\item We see that $|U|=p^{(m-1)(p+1)+1}$.
\item The group $U$ is defined by the following relations in the generating set $\{a_0,b_0\}$:
\begin{gather*}
[[a_0,_i b_0],[a_0,_j b_0]]=1\text{ for }0\leq i<j\leq p-1,\\
[a_0,_p,b_0]=\prod_{j=1}^{p-1}[a_0,_j b_0]^{-\binom{p}{j}},\\
a_0^{p^m}=1,\\
[a_0,_i b_0]^{p^{m-1}}=1\text{ for }1\leq i\leq p-1,\\
b_0^{p^{m-1}}=1.
\end{gather*}
\item The group $U$ is solvable of class $2$. The subgroup $\langle b_0^p\rangle W$ is abelian and maximal.
\item The derived subgroup $U'$ of $U$ is given by
\begin{align*}
U'&=\langle a_1,a_2,\ldots a_{p-1}\rangle\\
&=\left\langle [a_0,_i b_0]:1\leq i\leq p-1\right\rangle.
\end{align*}
In particular, $U'$ is abelian of exponent $p^{m-1}$ and order $p^{(m-1)(p-1)}$.
\item The Frattini subgroup $\Phi(U)$ of $U$ is given by
\[
\Phi(U)=\langle a_0^p,a_1,a_2,\ldots a_{p-1},b_0^p\rangle.
\]
\item  Any non-trivial subgroup of $U'$ that is normal in $U$ must contain the {\rm(}non-trivial{\rm)} element $[a_0,_{p-1}b_0]^{p^{m-2}}$.
\end{enumerate}
\end{Lem}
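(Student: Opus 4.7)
The plan is to prove that $a_{p-1}^{p^{m-2}}$ — which equals $[a_0,_{p-1}b_0]^{p^{m-2}}$ by part~(i) — generates the unique minimal normal subgroup of $U$ contained in $U'$. Since every non-trivial normal subgroup of a $p$-group contains a minimal normal subgroup, this would suffice. The key is a linear-algebra computation for the action of $\phi$ on the $p$-torsion of $U'$.

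First I would check that $a_{p-1}^{p^{m-2}}$ is a non-trivial central element. By part~(v), $a_{p-1}$ has order exactly $p^{m-1}$, so $a_{p-1}^{p^{m-2}}$ has order $p$ (the case $m=1$ is vacuous since $U'$ is then trivial). As $W$ is abelian, centrality reduces to showing $\phi(a_{p-1}^{p^{m-2}})=a_{p-1}^{p^{m-2}}$. Starting from $\phi(a_{p-1})=a_{p-1}\prod_{j=1}^{p-1}a_j^{-\binom{p}{j}}$ and using that $p\mid\binom{p}{j}$ for $1\le j\le p-1$ together with $a_j^{p^{m-1}}=1$, the cross terms vanish upon raising to the $p^{m-2}$-th power.

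Next, I would take a non-trivial normal subgroup $N$ of $U$ contained in $U'$. Since $U$ is a $p$-group, $N\cap Z(U)$ is non-trivial and therefore contains some element $z$ of order $p$. It then suffices to prove
\[
Z(U)\cap U'[p]=\langle a_{p-1}^{p^{m-2}}\rangle,
\]
because then $z$ generates this cyclic group of order $p$, forcing $a_{p-1}^{p^{m-2}}\in\langle z\rangle\le N$. Here $U'[p]$ denotes the $p$-torsion of $U'$, which by part~(v) is an $\mathbb{F}_p$-vector space of dimension $p-1$ with basis $a_1^{p^{m-2}},\ldots,a_{p-1}^{p^{m-2}}$. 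The definition of $\phi$, combined with the vanishing-of-cross-terms argument above, yields
\[
\phi(a_i^{p^{m-2}})=a_i^{p^{m-2}}a_{i+1}^{p^{m-2}}\quad(1\le i\le p-2),\qquad \phi(a_{p-1}^{p^{m-2}})=a_{p-1}^{p^{m-2}}.
\]
In this basis, $\phi-1$ is represented by the nilpotent matrix with $1$'s on the subdiagonal and zero last column, so its kernel is the one-dimensional subspace spanned by $a_{p-1}^{p^{m-2}}$. Since $W$ is abelian and $U=W\langle b_0\rangle$, the subgroup $Z(U)\cap U'$ equals the $\phi$-fixed points of $U'$, and intersecting with $U'[p]$ gives exactly $\langle a_{p-1}^{p^{m-2}}\rangle$, as desired.

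The main obstacle is the divisibility argument ensuring that the cross terms $a_j^{-\binom{p}{j}p^{m-2}}$ vanish in $W$; this is precisely what makes the last column of the matrix of $\phi-1$ zero and turns the operator into a clean subdiagonal shift on $U'[p]$. Once this is in hand, both the centrality of $a_{p-1}^{p^{m-2}}$ and the uniqueness of the minimal $U$-normal subgroup in $U'$ drop out of the same calculation, and the remainder of the argument is bookkeeping.
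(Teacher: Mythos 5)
Your treatment of part~(vii) is correct and rests on exactly the same computation as the paper's: the fixed points of $\phi$ on $U'$ (equivalently, the kernel of the homomorphism $x\mapsto[x,b_0]$) form the subgroup $\langle a_{p-1}^{p^{m-2}}\rangle$ of order $p$, with the divisibility of $\binom{p}{j}$ by $p$ killing the cross terms. The only (harmless) difference in packaging is that the paper iterates $\kappa(x)=[x,b_0]$ on the given normal subgroup until it vanishes and takes the last non-trivial image, whereas you intersect with $Z(U)$ and do the linear algebra inside the $p$-torsion $U'[p]$; and, like the paper, you treat parts (i)--(vi) as routine and prove only (vii) in detail.
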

\begin{proof}
Parts (i) to (vi) of the lemma are straighforward to prove, and we leave them to the reader. Note that Lemma~\ref{lem:basic_U_facts}~(iv) allows us to use Lemma~\ref{lem:abelian_index_p_facts} when reasoning about $U$.

To prove~(vii), note that the map $\kappa:U'\rightarrow U'$ defined by $\kappa(x)=[x,b_0]$ is a homomorphism with kernel $\langle a_{p-1}^{p^{m-2}}\rangle$ of order $p$.
Since $U$ is nilpotent, $\kappa^i$ is trivial for some positive integer $i$. Let $K\leq U'$ be a non-trivial subgroup of $U'$ that is normal in $U$. Let $i$ be the smallest integer such that $\kappa^i(K)=\{1\}$. Then $\kappa^{i-1}(K)=\langle a_{p-1}^{p^{m-2}}\rangle$. Since $K$ is normal in $U$, we have $\kappa^{i-1}(K)\leq K$ and so part~(vii) of the lemma follows.
\end{proof}

\begin{proof}[\bf{Proof of Theorem \ref{prevthm:primetypeA}}.]\label{thm:primetypeA}
Let $P$ be a non-abelian exponent critical $p$-group of exponent $p^m$ of type $\cal A$. We prove part~(i) of the theorem by showing that $P\cong U/N$ for some normal subgroup $N\in{\cal N}$.

By Theorem~\ref{thm:basic_pgroup} we have that $P=\langle a,b\rangle$ where $a,b\in P$ with $a$ of order $p^m$. Consider the free group $F$ generated by $a_0$ and $b_0$. Let $\pi$ be the (surjective) homomorphism from $F$ to $P$ mapping $a_0$ to $a$ and $b_0$ to $b$. We show that $P$ is a quotient $U/N$ of $U$ by checking that the relations defining $U$ are all satisfied under $\pi$.  (Here we are making use of the fact, see Lemma~\ref{lem:basic_U_facts}~(i), that $U=\langle a_0,b_0\rangle$.)

Let $A$ be the abelian maximal subgroup containing $a$. Since $P$ is not abelian, $b\in P\setminus A$. Lemma~\ref{lem:Type_B_basic_facts}~(i) shows that $b$ has order dividing $p^{m-1}$, since all elements of order $p^m$ lie in $A$. Hence $a^{p^m}=b^{p^{m-1}}=1$. Since $P'\leq \Phi(P)\leq A$, we see that the subgroup generated by $P'$ and $a$ is abelian. In particular, $[[a,_i b],[a,_jb]]=1$ for $0\leq i\leq j\leq p-1$. Lemma~\ref{lem:Type_B_basic_facts}~(iii) implies that $[a_0,_i,b_0]^{p^{m-1}}=1$ for $1\leq i\leq p-1$. Finally, Lemma~\ref{lem:abelian_index_p_facts}~(iii) shows that $[a_0,_p b_0]=\prod_{j=1}^{p-1}[a_0,_j b_0]^{-\binom{p}{j}}$. Hence $P\cong U/N$ for some normal subgroup $N$. Moreover, under this isomorphism the element $a\in P$ corresponds to the coset $a_0N$, and $b$ corresponds to $b_0N$.

We now check that the normal subgroup $N$ lies in $\cal N$. Since $a_0$ and $a$ both have order $p^m$, we see that $N\cap \langle a_0\rangle=\{1\}$. Since $P$ is non-abelian, we see that $U'\not\leq N$. Since $U$ and $P$ are $2$-generated, we see that $N\leq \Phi(U)$. So to prove part~(i) of the theorem, it remains to show that ${D}\leq N$. It suffices to show that $a^{p^{m-1}}[a,_{p-1}b]^{p^{m-2}}=1$ in $P$.

Consider the element $ba\in P$, since $ba\not\in A$, we see that $ba$ has order dividing $p^{m-1}$, by Lemma~\ref{lem:Type_B_basic_facts}~(i). Now, by Lemma~\ref{lem:abelian_index_p_facts}~(ii),
\begin{align*}
(ba)^p&=b^pa^p\prod_{j=1}^{p-1} [a,_j b]^{\binom{p}{j+1}}\\
&=a^p[a,_{p-1} b]\,b^p\left(\prod_{j=1}^{p-2}[a,_j b]^{\binom{p}{j+1}/p}\right)^p,
\end{align*}
since $p$ divides all the binomial coefficients in the product, and since all the factors lie in the abelian subgroup $A$. But $b$ has order dividing $p^{m-1}$ and the commutators all have order dividing $p^{m-1}$ by Lemma~\ref{lem:Type_B_basic_facts}~(iii). Hence $(ba)^p=a^p[a,_{p-1} b]x$ where $x\in A$ has order dividing $p^{m-2}$. So
\begin{align*}
1&=(ba)^{p^{m-1}}=\left((ba)^p\right)^{p^{m-2}}\\
&=a^{p^{m-1}}[a,_{p-1} b]^{p^{m-2}},
\end{align*}
as required. Hence part~(i) follows.

To prove part~(ii), we first investigate the orders of elements in the quotient $Q=U/{D}$. Since ${D}\cap\langle a_0\rangle=\{1\}$, the element $a_0D\in Q$ has order $p^m$. Hence the exponent of $Q$ is divisible by $p^m$. Define $M$ to be the subgroup $M:=\langle b_0^p\rangle W$. By Lemma~\ref{lem:basic_U_facts}~(iv), $M$ is maximal and abelian of exponent $p^{m}$. Since $D\leq M$, the subgroup $M/D$ is maximal in $Q$. Moreover, since $a_0D\in M/D$, we see that
$M/{D}$ of $Q$ has exponent $p^m$. Our next aim is to show that
\begin{equation}
\label{eqn:expt}
\text{Any element }g\in Q\setminus (M/{D})\text{ has order dividing }p^{m-1}.
\end{equation}

To see this, first write $g\in Q\setminus (M/{D})$ in the form $g=b_0^rx{D}$ where $1\leq r\leq p-1$ and $x\in M$. Replacing $g$ by $g^{r^{-1}\bmod p}$ does not change the order of $g$, and so we may assume without loss of generality that $r=1$. Since $M/{D}$ is an abelian maximal subgroup of $Q$, Lemma~\ref{lem:abelian_index_p_facts}~(ii) shows that
\[
(b_0x)^p{D}=b_0^px^p\prod_{j=1}^{p-1}[x,_j b_0]^{\binom{p}{j+1}}{D}.
\]
The factors in this product all lie in the abelian subgroup $M/{D}$ of $Q$. The factor $b_0^p{D}$ has order dividing $p^{m-2}$ as $b_0$ has order $p^{m-1}$. The factors $[x,_jb_0]^{\binom{p}{j+1}}{D}$ for $1\leq p-2$ have order dividing $p^{m-2}$, since (by Lemma~\ref{lem:basic_U_facts}~(v)) the derived subgroup of $U$ has exponent $p^{m-1}$ and $\binom{p}{j+1}$ is divisible by $p$. Hence
\[
(b_0x)^{p^{m-1}}{D}=x^{p^{m-1}}[x,_{p-1} b_0]^{p^{m-2}}{D}.
\]
Define
\[
S=\{x\in M: x^{p^{m-1}}[x,_{p-1} b_0]^{p^{m-2}}\in {D}\}.
\]
The map $x\mapsto x^{p^{m-1}}$ is a homomorphism on $M$. Moreover, the map $x\mapsto [x,b_0]$ is a homomorphism on $M$ by Lemma~\ref{lem:abelian_index_p_facts}~(i), and so the map $x\mapsto [x,_{p-1} b_0]$ is also a homomorphism. Thus the set $S$ is in fact a subgroup of $M$. To prove~\eqref{eqn:expt}, it suffices to show that $S=M$. Let $x=[a_0,_i b_0]$ where $1\leq i\leq p-1$. Then $x$ has order dividing $p^{m-1}$ since $x\in U'$. Now, the order of $[x,_{p-1}b_0]$ divides the order of $[x,_{p-i}b_0]$. But the order of $[x,_{p-i}b_0]$ divides $p^{m-2}$ since 
\begin{align*}
[x,_{p-i} b_0]&=[a_0,_{p}b_0]\\
&=\prod_{i=1}^{p-1} [a_0,_ib]^{-\binom{p}{i}}\\
&\in (U')^p.
\end{align*}
Hence $x=[a_0,_i b_0]\in S$ for $1\leq i\leq p-1$. By Lemma~\ref{lem:basic_U_facts}~(v), this implies that $U'\leq S$. Clearly $b_0^p\in S$, since the order of $b_0^p$ is $p^{m-2}$. Finally, $a_0\in S$ as $a_0^{p^{m-1}}[a_0,_{p-1}b_0]^{p^{m-2}}\in {D}$. Since $M$ is generated by $a_0$, $b_0^p$ and $U'$ we see that~\eqref{eqn:expt} holds, as required.

We are now in a position to prove part~(ii) of the theorem. Clearly $U/N$ is non-abelian, since $N$ does not contain $U'$. Since $U/{D}$ has exponent $p^m$, the exponent of $U/N$ divides $p^m$. But $U/N$ contains the element $a_0N$ of order $p^m$ (since $N\cap \langle a_0\rangle=\{1\}$), so $U/N$ has exponent $p^m$.

Since $N\leq \Phi(U)$, every maximal subgroup of $U/N$ is of the form $H/N$ for some maximal subgroup of $U$. Suppose a maximal subgroup $H/N$ contains an element $hN$ of maximal order $p^m$. Then $h\in U$ and since ${D} \leq N$, we get that $h{D}$ has order $p^m$.  By~\eqref{eqn:expt}, $h{D}\in M$. By Lemma~\ref{lem:basic_U_facts}~(vi), we may write $h=a_0^rx$ for some $x\in\Phi(U)$ where $1\leq r\leq p-1$. By replacing $h$ by a suitable power of $h$, we
may assume that $r=1$. Since $H$ is maximal, $H$ contains $\Phi(U)$, and so $\langle a_0,\Phi(U)\rangle \leq H$. But $\langle a_0,\Phi(U)\rangle=M$, a maximal subgroup of $U$, so $H=M$. But then $H/N$ is abelian, since it is a quotient of the abelian group $M$. This shows that $U/N$ is exponent-critical of exponent $p^m$, and so part~(ii) follows.

To prove the first statement of part~(iii), suppose that $p=2$ and $m=2$. For a contradiction, let $P$ be an exponent-critical $2$-group of exponent $2^2$ and of type $\cal A$. By part~(i), we have that $P\cong U/N$ for some $N\in{\cal N}$. Hence $|P|=|U/N|\leq |U/{D}|=2^3$. Since $P$ is a non-abelian group it must have order $8$ and is $2$-generated. So it cannot have a unique maximal subgroup. Further any maximal subgroups will have order $2^2$, and so are all abelian. Hence $P$ is not of type $\cal A$. This contradiction shows that the first statement of part~(iii) holds.

Assume now that $p$ is odd, or $m\geq 3$. Let $N\in {\cal N}$. Suppose, for a contradiction, that $N\cap U'$ is non-trivial. By Lemma~\ref{lem:basic_U_facts}~(vii), we see that $[a_0,_{p-1} b_0]^{p^{m-2}}\in N$. But $a_0^{p^{m-1}}[a_0,_{p-1} b_0]^{p^{m-2}}\in {D}\leq N$, and so $a_0^{p^{m-1}}\in N$. 
This contradicts the condition that $N\cap \langle a_0\rangle=\{1\}$. So $N\cap U'=\{1\}$ for all $N\in{\cal N}$. Hence $|(U/N)'|=|U'|=p^{(m-1)(p-1)}$.
Since we are assuming that $p$ is odd or $m\geq 3$, we see that $|(U/N)'|>p$. Since the derived subgroup of an exponent critical $p$-group of type $\cal B$ has order $p$, we see that $U/N$ has type ${\cal A}$ when $p$ is odd or $m\geq 3$. So part~(iii) of the theorem follows.
\end{proof}
\begin{Cor}
There is a unique largest non-abelian exponent-critical $p$-group of exponent $p^m$ of type ${\cal A}$. This $p$-group has cardinality $p^{(m-1)(p+1)}$.
\end{Cor}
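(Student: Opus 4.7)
The plan is to identify the universal group $U/D$ itself as the claimed unique largest exponent-critical $p$-group of exponent $p^m$ and type $\mathcal A$, and then deduce both the uniqueness/maximality assertion and the cardinality statement from Theorem~\ref{prevthm:primetypeA} and Lemma~\ref{lem:basic_U_facts}. Note that the corollary only makes sense when groups of this kind exist, so we are in the situation where $p$ is odd or $m\geq 3$.

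First I would verify that $D\in\mathcal N$. The inclusion $D\leq\Phi(U)$ follows from Lemma~\ref{lem:basic_U_facts}(vi), since the generator $a_0^{p^{m-1}}a_{p-1}^{p^{m-2}}$ is a product of a power of $a_0^p$ (recall $m\geq 2$) with a power of $a_{p-1}$. For $D\cap\langle a_0\rangle=\{1\}$, each non-trivial power of the generator has the shape $a_0^{ip^{m-1}}a_{p-1}^{ip^{m-2}}$ with $1\leq i\leq p-1$, and its $a_{p-1}$-component is non-trivial because $a_{p-1}$ has order $p^{m-1}$, so such an element cannot lie in $\langle a_0\rangle$. For $U'\not\leq D$, Lemma~\ref{lem:basic_U_facts}(v) gives $|U'|=p^{(m-1)(p-1)}$, and under the hypothesis $p$ odd or $m\geq 3$ one has $(m-1)(p-1)\geq 2$, so $|U'|>p=|D|$.

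Next, Theorem~\ref{prevthm:primetypeA}(ii) together with part~(iii) tells us that $U/D$ is a non-abelian exponent-critical $p$-group of exponent $p^m$ and type $\mathcal A$. To establish that it is \emph{largest}, I would invoke Theorem~\ref{prevthm:primetypeA}(i): any such group $P$ is isomorphic to $U/N$ for some $N\in\mathcal N$, and by definition of $\mathcal N$ we have $D\leq N$. Therefore $P\cong (U/D)/(N/D)$ is a quotient of $U/D$, giving $|P|\leq |U/D|$ with equality exactly when $N=D$. This simultaneously establishes existence of a largest member and its uniqueness up to isomorphism (it is the common source of which every other example is a quotient).

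Finally, the cardinality computation is immediate: by Lemma~\ref{lem:basic_U_facts}(ii) one has $|U|=p^{(m-1)(p+1)+1}$, and $|D|=p$, so
\[
|U/D|=\frac{|U|}{|D|}=p^{(m-1)(p+1)}.
\]
The only real obstacle is the verification that $D\in\mathcal N$; in particular the condition $U'\not\leq D$ is exactly where the hypothesis needed to rule out the degenerate $p=2$, $m=2$ case enters, and it is reassuring that this matches the exclusion in Theorem~\ref{prevthm:primetypeA}(iii). Everything else is a direct appeal to the preceding theorem and lemma.
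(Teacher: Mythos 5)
Your proof is correct and follows the same route as the paper, whose entire proof is the one-line observation that $D\in{\cal N}$ (so that Theorem~\ref{prevthm:primetypeA} and Lemma~\ref{lem:basic_U_facts}(ii) do the rest); you have simply filled in the verification that $D\in{\cal N}$, the quotient argument for maximality, and the order computation $|U/D|=|U|/|D|=p^{(m-1)(p+1)}$. One small aside: your closing remark slightly misattributes the role of the hypothesis that $p$ is odd or $m\geq 3$ --- in fact $U'\not\leq D$ holds even when $p=2$ and $m=2$ (there $U'=\langle a_1\rangle$ while $D=\langle a_0^2a_1\rangle$, and $a_0^2\neq 1$), so $D\in{\cal N}$ regardless; the hypothesis is really needed only to guarantee, via Theorem~\ref{prevthm:primetypeA}(iii), that $U/D$ has type ${\cal A}$ rather than type ${\cal B}$.
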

\begin{proof}
    The corollary follows since ${D}\in {\cal N}$.
\end{proof}

\bibliographystyle{plain}

\end{document}